\newcommand{\zz}{\ensuremath{\mathbb{Z}}}
\theoremstyle{plain}
\newtheorem{Th}{Theorem}[section]
\newtheorem{Lemma}[Th]{Lemma}
\newtheorem{Prop}[Th]{Proposition}
\theoremstyle{definition}
\newtheorem{?}[Th]{Problem}
\newtheorem{Alg}[Th]{Algorithm}
\newenvironment{sproof}{%
  \proof}{\endproof}
\begin{document}
	
	\title{A generalization of Lehman's method}
	\author[J. Hales \and G. Hiary]{Jonathon Hales \and Ghaith Hiary}
	
	\address{
    JH \& GH: Department of Mathematics, The Ohio State University, 231 West 18th
    Ave, Columbus, OH 43210, USA
    }
	\email{hales.41@osu.edu}
	\email{hiary.1@osu.edu}
	\subjclass[2010]{Primary: 11Y05.}
	\keywords{Integer factorization, squarefree numbers, algorithm, rational approximation}
	\maketitle
	\begin{abstract}
		A new deterministic algorithm for finding square divisors, and finding $r$-power divisors in general, is presented. This algorithm is based on Lehman's method for integer factorization and is straightforward to implement. While the theoretical complexity of the new algorithm is far from best known, the algorithm becomes especially effective if even a loose bound on a square divisor is known. Additionally, we answer a question by D. Harvey and M. Hittmeir on whether their recent deterministic algorithm for integer factorization can be adapted to finding $r$-power divisors.
	\end{abstract}
	
\section{Introduction}\label{Introduction}

Let $N$ be a positive integer. A natural question is whether one can learn partial information about the factorization of $N$ any faster than by factoring $N$ via a general-purpose algorithm. 
We derive a new deterministic algorithm to find square divisors of $N$ or decide that $N$ is squarefree. We also derive a generalization of our algorithm to $r$-power detection. That is, to test for a given $r\ge 2$ if $N$ can be written in the form $N=p^rq$ for some integers $p >1$ and $q$.
%, and without necessarily finding $p$ or $q$.

We analyze our algorithm rigorously and prove that it runs in $N^{1/(r+2)+o(1)}$ bit operations. The $o(1)$-term accounts for implicit constants that depend on $r$ as well as for the cost of finding integer roots of certain $(r+1)$-degree integer polynomials. Our main theoretical results are stated in \S\ref{main result}.

By examining the big-$O$ implicit constants in the running time of our algorithm, we find that they grow rapidly with $r$, faster than $r^r$. But when $r=2$, these constants are still fairly small. This leads to a practical algorithm for finding square divisors of $N$ in certain special situations. We implement our algorithm and summarize empirical findings about its practical running time in \S\ref{main result}. 

Lastly, we answer a question recently posed by Harvey and Hittmeir in \cite{rRoots}. They asked whether their new
$N^{1/5+o(1)}$ integer factorization algorithm, derived in  \cite{LogLogSpeedUp}, can be adapted to finding $r$-power divisors of $N$. We outline such an adaptation of their algorithm in \S\ref{HH variant}. This adaptation is mainly of theoretical interest.

In the sequel, when we compare (or state) complexity of algorithms, we are comparing (or stating) a worst-case scenario for each algorithm. A worst-case scenario 
in our setting occurs, for instance, when $N$ is prime. Special cases of our algorithms, relying on partial information about the factorization of $N$, will be presented separately.

\section{Preliminaries}\label{Preliminaries}

By calculating $x := \lfloor \sqrt{N}\rfloor$ using integer operations, then checking if $x^2$ is equal to $N$, one can quickly determine if $N$ is a square -- see \cite[Algorithm 9.2.11]{PrimesComp}.
Suppose now that $N$ is not a square, but is divisible by some $p^2>1$. So $N=p^2q$ where $q>1$. Then at least one of $p$ and $q$ is of size $\le N^{1/3}$. Thus,  one can verify that $N$ is squarefree by checking that $N$ is not a square and is not divisible by any integer $n$ satisfying $1<n\le N^{1/3}$.

One can double the speed of this basic trial division algorithm. For if $2$ does not divide $N$, then it suffices to check divisibility of $N$ by odd $n\le N^{1/3}$ only. Extending this idea, by incorporating primes beyond $2$, leads to a variant on trial division known as the wheel factorization method \cite[p. 119]{PrimesComp}. Using the primes $2,3,5$ cuts the number of $n$'s to be checked from about $N^{1/3}$ down to $(4/15) N^{1/3}$. Using more primes produces increasingly marginal improvements and complicates the mechanics of the algorithm, so we do not do this here.

The classical Pollard--Strassen algorithm gives a highly significant improvement on the wheel variant method. Pollard--Strassen, which is based on a multi-point evaluation algorithm for integer polynomials, can test squarefreeness in $N^{1/6+o(1)}$ bit operations and space; see \cite[\S 5.5]{PrimesComp} for an overview.  Using the improved version of Pollard--Strassen derived in \cite{BGS}, the $o(1)$-term stands for a quantity of the form $C (\log N)^2$. 

Recently, Harvey and Hittmier~\cite{rRoots} rigorously analyzed the algorithm in \cite{bdgh} for finding $r$-power divisors, and without any assumption on the factorization of $N$. They assert that their algorithm can find $r$-power divisors in $N^{1/(4r)+o(1)}$ time and negligible space, where the $o(1)$-term stands for a quantity of the form $c_{\epsilon} (\log N)^{10+\epsilon}/r^3$ for any positive constant $\epsilon$. This is currently the best known asymptotic result on this problem. They report, however, that this algorithm is decidedly impractical.

We are also able to adapt the Coppersmith algorithm~\cite{Coppersmith} to search for square divisors of $N$. We sketch this adaptation here since it does not appear elsewhere, as far as we know. The Coppersmith algorithm is a fast algorithm for finding integer roots of bivariate integer polynomials. Using the notation in \cite{coppersmith2}, the bivariate polynomial that arises when searching for square divisors may be taken to be $p(x,y)=(P_0+x)^2(Q_0+y)-N$. Letting $\delta$ be the max degree of $p(x,y)$ in each variable separately, we have $\delta=2$. The Coppersmith algorithm can then quickly find (in poly-log time in the inputs) all integer roots $(x_0,y_0)$ of $p(x,y)$ in the rectangle $|x|< X$ and $|y|<Y$, provided $XY< W^{2/(3\delta)} = W^{1/3}$, where $W$ is the maximum coefficient of $p(xX,yY)$. 

Let us define the bounds $X$ and $Y$ to be $P_0/N^{\alpha}$ and $Q_0/N^{\alpha}$, respectively, where $\alpha>0$ is to be chosen. Considering the coefficient of (say) $x$ in $p(xX,yY)$, we deduce if $P_0$ and $Q_0$ lie in dyadic intervals around $N^{1/3}$, then $W^{1/3}\gtrsim N^{1/3-\alpha/3}$. Thus, the prerequisite condition for the Coppersmith algorithm will be fulfilled if $1/3-\alpha/3 \ge 2/3-2\alpha$, which permits taking $\alpha$ as small as $1/5$. In other words, we can search for $p$ and $q$ in dyadic intervals around $N^{1/3}$ using about $N^{1/5}$ applications of the Coppersmith algorithm, and therefore in $N^{1/5+o(1)}$ time overall. 

Extending this approach to general dyadic intervals containing $P_0$ and $Q_0$, one ultimately obtains an algorithm to find square divisors of $N$, or decide that $N$ is squarefree, in $N^{1/5+o(1)}$ time and negligible space. But the $o(1)$-term is likely very impactful, so we do not pursue this approach here.

To state our findings, we rely on the standard computational model that is nicely outlined in \cite{LogLogSpeedUp}. Moreover, we use the recent important result in \cite{harvey-hoeven} that the cost, M$(n)$, of multiplying two $n$-bit integers satisfies M$(n)=O(n\log n)$. In what follows, $\lg x$ denotes the logarithm of $x$ to base $2$.

\section{Main results}\label{main result}

The computational complexity of a deterministic algorithm  for $r$-power detection can generally be explicitly bounded by a quantity of the form
\begin{equation}
    C_{\alpha} (\log N)^{\kappa_{\alpha}} N^{\alpha},\qquad \textrm{for } N>N_{\alpha}.
\end{equation}
where $C_{\alpha}$, $\kappa_{\alpha}$, and $N_{\alpha}$ may depend on $r$. The smaller $\alpha>0$ is (the more power-saving), the larger $C_{\alpha}$, $\kappa_{\alpha}$, and $N_{\alpha}$ often tend to be, and the harder it is to bound them. 
Consequently, the simplicity of, for example, the wheel variant with primes $2,3,5$ makes it hard to beat in practice unless $N>N_0$, where $N_0$ is likely large. On the other hand, if $N$ is too large, then computing with the given algorithm may no longer be feasible using available computing resources anyway.

Therefore, the target range of $N$ in our context will be $N$ of ``moderate size,'' say with $30$ to $50$ decimal digits. It does not seem that sophisticated general-purpose factoring algorithms are practical over this range of $N$. The main candidate algorithms for squarefree detection over our target range appear to be the wheel variant, the Pollard--Strassen algorithm, the generalized Lehman method derived here, or a hybrid of them. 

We prove the following two theorems, applicable to $r\ge 2$. The first theorem is based on the Lehman strategy for factoring integers \cite{Lehman}, and the second theorem is based on Harvey--Hittmier integer factorization method \cite{LogLogSpeedUp}.
The bound given in each theorem is on the required number of bit operations. 

\begin{Th}\label{LehmanGen}
Algorithm~\ref{LehmansMethodGen} takes as input two integers $N>1$ and $1< r\le \lg N$, and returns a nontrivial factor of $N$ or proves that $N$ is $r$-power free. The algorithm finishes after $O(N^{1/(r+2)}(\log N)^2\log\log N)$ bit operations. 
\end{Th}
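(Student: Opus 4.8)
The plan is to follow the Lehman strategy, which splits the search for a factor into a trial-division phase and a "rational approximation" phase. First I would run trial division on $N$ by all integers $n$ with $1 < n \le B$ for a cutoff $B$ to be optimized; this either produces a nontrivial factor or guarantees that every $r$-power divisor $p^r$ of $N$ has $p > B$. Writing $N = p^r q$ with the convention that $p$ is the base of the $r$-th power we are hunting for, the second phase exploits the fact that if $p$ is large then $q = N/p^r$ is small, so the pair $(p^r, q)$ — or rather a suitable pair $(kp^r, kq)$ for a multiplier $k$ — can be located by examining how close $k p^r q = kN$ is to a perfect $(r+1)$-st power. Concretely, I expect the algorithm to loop over multipliers $k$ in a range $1 \le k \le K$, and for each $k$ to compute $\lfloor (kN)^{1/(r+1)} \rfloor$ (or the nearby integers) and test whether setting $a$ near that value and solving the resulting polynomial relation $a^{?}\cdots = kN$ recovers $p$ and $q$. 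The key algebraic point is that when $k q$ and $p$ are both close to $(kN)^{1/(r+1)}$, the quantity $a := $ the relevant combination lands within a controlled window of $\lceil (kN)^{1/(r+1)} \rceil$, and the width of that window shrinks as $K$ grows, exactly as in Lehman's original $r=2$, single-power analysis.

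The quantitative heart of the argument is a gap/counting lemma: for each candidate $a$ one must solve an integer polynomial equation of degree $r+1$ (this is where the "cost of finding integer roots of certain $(r+1)$-degree integer polynomials" in the $o(1)$ comes from) to decide whether a valid $(p,q)$ exists, and one must show that choosing $K \asymp B/N^{1/(r+1)}$-ish forces, for some $k \le K$, the target $a$ to be the unique integer in an interval of length $O(1)$ around $\lceil (kN)^{1/(r+1)}\rceil$. Then the phase-two loop runs over $O(K)$ values of $k$, each with $O((\log N)^{O(1)})$ work for the root-finding, contributing $\widetilde O(K)$ bit operations; phase one contributes $\widetilde O(B)$; balancing $B$ against $K \asymp B \cdot N^{-1/(r+1)}$ subject to covering all cases turns out to balance at $B \asymp N^{1/(r+2)}$, giving the stated $N^{1/(r+2)}(\log N)^2\log\log N$. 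The $(\log N)^2$ factor comes from $\mathrm{M}(\log N) = O(\log N\log\log N)$ arithmetic on $O(\log N)$-bit numbers times an extra $\log N$ for the per-step overhead, and correctness is immediate: if the algorithm returns a factor it has verified it by division, and if it returns "$r$-power free" it has exhausted both the small-$p$ case (trial division) and the large-$p$ case (the $k$-loop), which by the lemma is guaranteed to catch any genuine $r$-power divisor.

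To carry this out I would (i) fix the reduction $N = p^r q$ and show WLOG $p$ is prime and $q \ge 1$, and record the dichotomy "$p \le B$ or $p > B$"; (ii) in the large-$p$ case, introduce the multiplier $k$ and the target integer $a$, and prove the window lemma bounding $|a - \lceil (kN)^{1/(r+1)}\rceil|$ by a quantity that is $O(1)$ once $k$ ranges up to $K$; (iii) describe the inner root-finding step — given $a$ and $k$, form the univariate degree-$(r+1)$ polynomial whose integer roots encode $p$, and invoke a standard polynomial-root-finding subroutine, charging $(\log N)^{O(1)}$ per call with the $o(1)$/implicit constant absorbing the $r$-dependence; (iv) assemble the two phases, optimize $B$, and tally the bit-operation count using $\mathrm{M}(n) = O(n\log n)$. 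The main obstacle I anticipate is step (ii): getting the window lemma with a genuinely $O(1)$-width interval (rather than something that grows with $r$ in a way that would spoil the exponent $1/(r+2)$) requires a careful Taylor-type estimate of $(kN)^{1/(r+1)}$ around $p + kq/(\text{something})$ and a clean choice of which integer near the root to test — this is the place where the analysis genuinely generalizes, and merely mimics Lehman, rather than reduces to it.
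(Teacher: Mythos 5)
Your two-phase outline (trial division up to $N^{1/(r+2)}$, then a Lehman-style multiplier search resolved by degree-$(r+1)$ integer root finding) matches the paper at the top level, but the quantitative core of your phase two is not right and, as proposed, would not give the exponent $1/(r+2)$. The paper does not loop over a single integer multiplier $k$ with an $O(1)$-wide window around $(kN)^{1/(r+1)}$. It approximates $p/q$ by a Farey fraction $a/b$ and uses the weighted AM--GM structure $(aq)(bp)^r=ab^rN$: Proposition~\ref{LehmanInqGen} shows that for a suitable pair $(a,b)$ the unknown $u=aq+rbp$ exceeds $(r+1)(ab^rN)^{1/(r+1)}$ by at most roughly $r^{r+3/2}N^{1/(2(r+2))}2^{(1-r)j/6}/\sqrt{ab}$, where $2^jN^{1/(r+2)}\le p<2^{j+1}N^{1/(r+2)}$ and the Farey order $B=N^{1/(r+2)}2^{-(2r+1)j/3}$ is chosen separately for each dyadic range of $p$. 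So the window attached to each admissible pair has width a power of $N$ (about $N^{1/(2(r+2))}$), not $O(1)$; the total count comes out to $O(N^{1/(r+2)})$ only in aggregate, after summing $1/\sqrt{ab}$ over $a\lesssim 2^{(r+2)j/3+r+1}$, $b\lesssim B$, and over $j$ --- and this is exactly where the dyadic decomposition, absent from your plan, is needed. Your window lemma with a genuinely $O(1)$-width interval cannot hold at the stated parameter size (already for $r=1$, the pair guaranteed by Lehman's lemma may have $ab$ small, in which case the gap is of order $N^{1/6}/\sqrt{ab}$), and your balancing relation $K\asymp B\,N^{-1/(r+1)}$ with $B\asymp N^{1/(r+2)}$ gives $K<1$, so the bookkeeping as written is inconsistent.

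There is also a structural loss in the single-multiplier formulation when $r\ge 2$. The multiplier implicit in the test is $ab^r$ (so that $kN=(aq)(bp)^r$), and the correct test is whether $f_r(x)=x^{r+1}-ux^r+r^rab^rN$ has an integer root (namely $x=rbp$) with nontrivial gcd with $N$; the admissible multipliers form a sparse two-parameter family, and looping over all integers $k$ up to $\max(ab^r)\approx N^{r/(r+2)}$ would already exceed the target run time. The paper therefore iterates over pairs $(a,b)$, and both correctness and the complexity count hinge on Proposition~\ref{LehmanInqGen} --- whose proof requires bounding the polynomial $P(x,y)=\bigl((x+ry)^{r+1}-(r+1)^{r+1}xy^r\bigr)/(x-y)^2$ --- together with the observation that $f_r$ has at most three real roots, the relevant one lying within an explicit distance $L$ of $ru/(r+1)$, so bisection recovers it in $O(\log N)$ evaluations; the gcd checks then give the claimed $O(N^{1/(r+2)}(\log N)^2\log\log N)$. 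These are the ingredients your sketch would need to supply, and supplying them essentially forces the paper's $(j,a,b,u)$ loop structure rather than your $k$-loop.
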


\begin{Th}\label{HHGen}
Algorithm~\ref{MainSearchGen} takes as input two integers $N>1$ and $1 < r \le \lg N$, and returns a nontrivial factor of $N$ or proves that $N$ is $r$-power free. The algorithm finishes using $O(N^{1/(3+2r)}(\log N)^5\log\log N)$ bit operations and space.
\end{Th}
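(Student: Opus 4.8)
The plan is to follow the Harvey--Hittmeir deterministic factorization algorithm of \cite{LogLogSpeedUp}, replacing its governing identity $N=pq$ by $N=p^{r}q$. The first step is to reduce Theorem~\ref{HHGen} to a single dyadic search: given a dyadic interval $[P,2P)$ with $2\le P\le N^{1/r}$, return a nontrivial factor of $N$ or certify that no $p\in[P,2P)$ satisfies $p^{r}\mid N$. Since $p^{r}\mid N$ with $p\ge 2$ forces $p\le N^{1/r}$, running this over the $O(\log N)$ dyadic intervals covering $[2,N^{1/r}]$ settles the problem, so it suffices to bound the cost of one interval. For a fixed interval there are two Pollard--Strassen-style searches available: a direct search for $p$ among the $\le P$ integers of $[P,2P)$, at cost $N^{o(1)}P^{1/2}$ in both time and space, or a search for the cofactor $q=N/p^{r}$ among the $\Theta(N/P^{r})$ integers of $(N/(2P)^{r},\,N/P^{r}]$, at cost $N^{o(1)}(N/P^{r})^{1/2}$. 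Taking the cheaper of the two, the extremal interval is the balanced one with $P\asymp N^{1/(r+1)}$, where both searches cost $N^{1/(2r+2)+o(1)}$; for $r=2$ this reproduces the $N^{1/6+o(1)}$ squarefree test recalled in \S\ref{Preliminaries}. The task is thus to shave the exponent from $1/(2r+2)$ to $1/(2r+3)=1/(3+2r)$, which for $r=1$ is precisely the Harvey--Hittmeir improvement from $N^{1/4}$ to $N^{1/5}$.

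The improvement comes from the Harvey--Hittmeir technical device, invoked with the $r$-power relation in place of $N=pq$. Fix a modulus $m\asymp N^{\beta}$, with $\beta$ to be chosen, and restrict attention to the interval sizes near $N^{1/(r+1)}$ where neither plain search already beats $N^{1/(3+2r)}$. The relation $N=p^{r}q$ transports residue information: if $p\equiv c\pmod m$ then $q\equiv Nc^{-r}\pmod m$ whenever $\gcd(c,m)=1$, so each admissible residue $c$ simultaneously cuts the candidates for $p$ in $[P,2P)$ and the candidates for $q$ in its dyadic window down to arithmetic progressions of lengths $\asymp P/m$ and $\asymp N/(P^{r}m)$. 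One then feeds the polynomial attached to these progressions -- the analogue of the Harvey--Hittmeir product $\prod_{j}(X+jm)$, now carrying the $r$-th power and hence of degree proportional to $r$ -- into their multipoint-evaluation subroutine and extracts $\gcd$'s with $N$ to locate $p$ when it exists. Bookkeeping the fast integer and polynomial arithmetic through $\mathrm{M}(n)=O(n\log n)$ and the multipoint-evaluation cost $O(\mathrm{M}(n)\log n)$ produces the stated $(\log N)^{5}\log\log N$ overhead and, the whole computation being carried out in Pollard--Strassen fashion, the matching bound on space; the hypothesis $r\le\lg N$ (for larger $r$, $N$ is vacuously $r$-power free since $p\ge 2$) keeps the $r$-dependent factors inside these logarithmic terms.

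With the subroutine in hand, the running time for an interval is, up to $N^{o(1)}$, the number of residue classes examined times the per-class multipoint-evaluation cost, subject to $m$ not exceeding the progression lengths. Optimizing $\beta$ balances these contributions, and together with $P\asymp N^{1/(r+1)}$ this drives the exponent down to $1/(3+2r)$; specializing $r=1$ returns the Harvey--Hittmeir exponent $1/5$ and $r=2$ gives $1/7$. For dyadic intervals outside the window around $N^{1/(r+1)}$, one of the two plain Pollard--Strassen searches is already below $N^{1/(3+2r)}$, so the maximum over the $O(\log N)$ intervals remains $N^{1/(3+2r)+o(1)}$, and the outer loop is absorbed into the logarithmic factor.

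The main obstacle I anticipate is not the parameter optimization but the faithful generalization of the Harvey--Hittmeir technical lemma itself: their estimates are calibrated to the quadratic relation $N=pq$, and pushing them through with a polynomial whose degree grows like $r$, while preserving the power saving, the precise $(\log N)^{5}\log\log N$ overhead, and the near-linear space, demands redoing their polynomial-arithmetic bounds and their case analysis -- small prime factors of $N$, residues $c$ with $\gcd(c,m)>1$, and the boundary intervals $P\asymp 1$ and $P\asymp N^{1/r}$ -- in the $r$-power setting. A secondary point is to confirm that a failed search genuinely \emph{certifies} $r$-power-freeness, which reduces to checking that the arithmetic progressions scanned, over all residues and all dyadic intervals, cover every potential $p$ with $p^{r}\mid N$.
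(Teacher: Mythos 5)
Your sketch does not actually contain the mechanism that produces the exponent $1/(3+2r)$, and the mechanism you do describe would not produce it. Splitting a dyadic (or any) interval of length $P$ into residue classes modulo $m$ and running a Pollard--Strassen-type product/multipoint-evaluation search on each progression costs on the order of $m\cdot(P/m)^{1/2}=(mP)^{1/2}$, which is \emph{worse} than the plain $P^{1/2}$ search; the residue transport $p\equiv c\ (\mathrm{mod}\ m)\Rightarrow q\equiv Nc^{-r}\ (\mathrm{mod}\ m)$ gives no saving by itself, because you still pay for every class. So the step ``optimizing $\beta$ balances these contributions and drives the exponent down to $1/(3+2r)$'' is an assertion with no cost accounting behind it, and your description of the Harvey--Hittmeir device as the progression product $\prod_j(X+jm)$ misidentifies the algorithm: that is Strassen/Pollard--Strassen machinery, not the exponent-one-fifth method of \cite{LogLogSpeedUp}.

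What the theorem actually rests on (and what Algorithm~\ref{MainSearchGen} implements) is a babystep--giantstep scheme for a discrete logarithm modulo the \emph{unknown} divisor $p$, and the saving has two sources your proposal omits. First, a Lehman/lattice construction (Proposition~\ref{LatticeGenPow}, proved via LLL as in Proposition 3.3 of \cite{LogLogSpeedUp}) produces, for each residue $\sigma$ mod $m$ and each point $\sigma_0$ of a multiplicative grid of ratio $1+1/m_0$, a pair $(a,b)$ with $u:=aq+rbp$ lying within about $\bigl(r\binom{r+2}{2}\bigr)^{1/2}(N/\sigma_0^{r-1})^{1/2}m_0^{-3/2}$ of a computable center \emph{and} determined modulo $m^2$ -- the amplification from $m$ to $m^2$, and the factor $(N/\sigma_0^{r-1})^{1/2}$ with $\sigma_0\ge N^{2/(3+2r)}$ guaranteed by prior trial division, are precisely where the $r$-dependent power saving comes from. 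Second, the window of candidate $u$'s is scanned not by testing each candidate but by a collision search between babysteps $\beta^{m^2 i}$ and giantsteps $v_{\sigma,\sigma_0}$, which requires an element of provably large order (Lemma~\ref{LargeOrder}) and the product-tree collision Algorithm 4.1 of \cite{LogLogSpeedUp}; balancing $m_0\approx N^{1/(3+2r)}$ against $\lambda\approx N^{5/(2(3+2r))}/(mm_0)^{3/2}$ yields the stated $N^{1/(3+2r)}(\log N)^5\log\log N$ bound, and candidate matches are verified through integer roots of $f_r(x)=x^{r+1}-ux^r+r^rab^rN$. None of this is in your write-up, and you yourself flag the ``faithful generalization of the Harvey--Hittmeir technical lemma'' as an anticipated obstacle -- but that lemma and the associated complexity bookkeeping \emph{are} the proof, so what you have is a plan whose central step is missing and whose substitute mechanism does not work.
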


Theorem~\ref{LehmanGen} is proved in \S\ref{new algorithm} -- see Proposition~\ref{LehmansMethodProp} -- and a proof of Theorem~\ref{HHGen} is sketched in \S\ref{HH variant}. The space requirement in Theorem~\ref{LehmanGen} is negligible. 

When $r=2$, we analyze Algorithm~\ref{LehmansMethodGen} more thoroughly, detailing our implementation of that case in \S \ref{r=2 case}. We find that our algorithm improves on the simple wheel variant method when $N\ge N_0\approx 4.8 \cdot 10^{43}$, as seen in Table~\ref{timings table}. 

We also implemented Pollard--Strassen using the highly-optimized product tree algorithm in {\tt FLINT}. We found the Pollard--Strassen algorithm to be surprisingly practical, so long as memory was not an obstacle. When $N\approx10^{41}$, for example, the memory requirement in our implementation of Pollard--Strassen would be about $10$ gigabytes. This memory requirement would soon be hard to fulfill, even for otherwise feasible $N$. 

As can be seen from the proof of Proposition~\ref{LehmansMethodProp}, our Algorithm~\ref{LehmansMethodGen} can be made to run significantly faster if we have a good lower bound on $p$, where $N=p^2q$. So, to take advantage of this, we consider the following special problem, which we call Problem $\textrm{P}^*$. 

\begin{quote}
    \textbf{Problem $\textrm{P}^*$}: Given $N>1$. Suppose $N=p^2 q$ where $p$ and $q$ are unknown primes satisfying $q < p < 8q$. Find $p$ and $q$.  
\end{quote}

The constant $8$ in the bound on $p$ and $q$ is somewhat arbitrary. The assumption that $p$ and $q$ are prime is reflective of the experiments we ran and can otherwise be removed.

Problem $\textrm{P}^*$ is closely related to ``factoring with the high bits known'' problem, but in the context of square divisors. We used a small modification of Algorithm~\ref{LehmansMethodGen} to solve Problem $\textrm{P}^*$, as detailed in \S\ref{r=2 case}.
This modification outperforms the Pollard--Strassen algorithm and the wheel variant significantly, which can be seen in Table~\ref{P* table} and Figure~\ref{timings graph}. Since the speed of the Generalized Lehman method depends on how well $p/q$ can be approximated by fractions with small denominators, the timings in Table~\ref{P* table} are the average of $100$ randomly chosen values of $N$ near $10^j$ meeting the criteria for problem $P^*$. 

\begin{table}[ht]
\renewcommand\arraystretch{1.5}
\centering
\begin{tabular}{|c||c|c|c|}
\hline
 $N$ & Generalized Lehman & Pollard--Strassen & Wheel variant \\
 \hline
 $\approx 10^{21}$ & $0.018248$s& $0.1475$s &$0.0993$s \\
 $\approx 10^{24}$ &$0.8226$s & $0.6623$s & $0.8472$s\\
 $\approx 10^{27}$ &$0.3408$s & $3.0142$s & $7.2905$s\\
 $\approx 10^{30}$ &$1.2457$s & $14.09132$s&$78.2866$s \\
 $\approx 10^{33}$ &$5.5649$s & $70.9481$s &$784.1740$s\\
 $\approx 10^{36}$ &$38.2918$ & $248.102$s &$8243.5654$s\\
\hline
\end{tabular}
\caption{\small Timings  for generalized Lehman, Pollard--Strassen, and the wheel variant on Problem $\textrm{P}^*$. These algorithms are implemented in {\tt C++}, {\tt GMP/MPFR}, and {\tt FLINT}. The timings are obtained using $1$ processor with $2.2$ Ghz processor speed.}
\label{P* table}
\end{table}

\begin{table}[ht]
\renewcommand\arraystretch{1.5}
\centering
\begin{tabular}{|c||c|c|c|}
\hline
 $N$ & Generalized Lehman & Pollard--Strassen & Wheel variant \\
 \hline
 $\approx 10^{20}$ & $6.364$s   & $0.285$s & $0.051$s \\
 $\approx 10^{25}$ & $90.233$s  & $2.004$s & $2.017$s    \\
 $\approx 10^{30}$ & $1597.14$s  & $16.986$s & $105.822$s  \\
 $\approx 10^{35}$ & $22774.42$s  & $150.378$s & $4920.82$s    \\
 $\approx 10^{37}$ & $72029.07$s& $434.647$s & $22751.65$s  \\
 $\approx 10^{44}$ & $4.05\cdot 10^{6}\text{s}^*$& $7637.78\text{s}^*$& $4.9\cdot 10^{6}\text{s}^*$\\
\hline
\end{tabular}
\caption{\small Worst-case timings (e.g.\ when $N$ is prime) for generalized Lehman, Pollard--Strassen, and the wheel variant. Entries marked with $*$ are extrapolated.}
\label{timings table}
\end{table}

\begin{figure}[ht]
    \includegraphics[scale = 0.60]{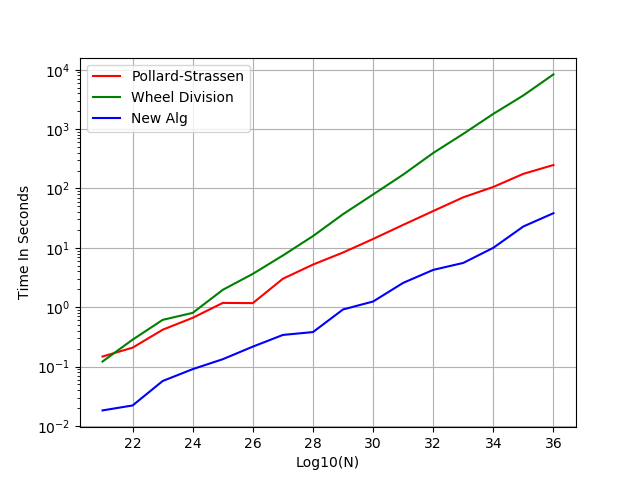}
    \caption{\small Log-log plot of timings for generalized Lehman,  Pollard--Strassen, and the wheel variant on problem $\textrm{P}^*$. Space requirement of Pollard--Strassen algorithm not displayed.}
    \label{timings graph}
\end{figure}

\section{Lehman's Method}\label{Lehman method}

In 1974, Lehman~\cite{Lehman} found a remarkably simple algorithm for factoring a semi-prime $N = pq$. Lehman's idea was to search for integers $a$ and $b$ so that $aq$ is close to $bp$. Given such $a$ and $b$, Fermat's factorization method will quickly recover the factors of $4abN$ closest to the square root, namely $aq$ and $bp$. On the other hand, comparing the arithmetic and geometric means of $aq$ and $bp$ gives that $aq+bp$ is close to $(4abN)^{1/2}$. More specifically, and using the version of Lehman's lemma in \cite[Lemma 3.3]{Harvey}, if $(N/s)^{1/2} \leq p \leq N^{1/2}$, then  there exists $a$ and $b$ such that $1\le ab\le s$ and the difference between the arithmetic and geometric means satisfies
\begin{equation}
0\leq aq+bp-(4abN)^{1/2} \leq \frac{N^{1/2}}{4s(ab)^{1/2}}=:U.
\end{equation}
Thus, for a given admissible $a$ and $b$, and since the geometric mean $(abN)^{1/2}$ is known, one can find the unknown quantity $aq+bp$ by checking $\le U+1$ candidate values close to $(4abN)^{1/2}$. Specifically, for each candidate $u$, it can be quickly determined if $u=aq+bp$ by finding the integer roots of the quadratic polynomial $x^2-ux+abN$, if there are any. For if $u=aq+bp$, then the polynomial $x^2-ux+abN$ has roots $aq$ and $bp$. Once these roots are found, $N$ can be factored quickly.

To ensure that $aq$ is close to $bp$, the Lehman method searches for a good rational approximation $a/b$ of $p/q$. Such an approximation is found by trying representatives from an appropriate subset of the rationals in $[0,1]$, for example those in the Farey sequence. Overall, to minimize the total number of $u$'s  to be checked, the Lehman method chooses $s\approx N^{1/3}$. This yields an algorithm for factoring $N$ in $O(N^{1/3}\log N\log \log N)$ bit operations.

\section{Generalized Lehman algorithm} \label{new algorithm}
Let $N$ and $r$ be integers such that $2\le r\le \lg N$. We would like to generalize Lehman's method to check if $N=p^rq$ for some integers $q$ and $p>1$. If indeed $N=p^rq$ and $N$ is not an $r$-power, then there is a divisor of $N$ less than or equal to $N^{1/(r+1)}$. The key point under this scenario  is that, compared to the usual Lehman method, the intervals containing candidates $u$ of the unknown $aq+rbp$ will be shorter. 

We organize our search for a rational approximation $a/b$ of $p/q$, and hence our search of the candidate $u$'s, by assuming at each step that $p$ is contained in a dyadic interval. This allows us to bound the possible values of $a$ and $b$ more optimally.

\begin{Prop}\label{LehmanInqGen}
	Let $j$ be a nonnegative integer and $B$ be defined as in \eqref{Bdef}. Suppose $B>1$, 
	$N = p^r q$,  and 
	\begin{equation}\label{DyadicInt}
	2^j N^{1/(r+2)} \leq p \leq 2^{j+1}N^{1/(r+2)}.
	\end{equation}
	Then there exists a pair of positive integers $(a,b)$ such that $a \leq \lceil 2^{(r+2)j/3+r+1}\rceil$,  $b\leq N^{1/(r+2)}2^{-(2r+1)j/3}$, and
	\begin{equation}\label{GenPowAMGMBound}
	 aq+rbp-(r+1)(ab^rN)^{1/(r+1)} < \frac{r^{r+3/2}}{\sqrt{ab}}N^{1/(2(r+2))}2^{(1-r)j/6-1}.
	\end{equation}
	
\end{Prop}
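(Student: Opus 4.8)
The plan is to convert the dyadic window \eqref{DyadicInt} for $p$ into one for the ratio $\theta:=p/q$, to approximate $\theta$ by a rational $a/b$ of controlled height (Dirichlet's theorem, equivalently Farey fractions), and then to extract \eqref{GenPowAMGMBound} from a weighted arithmetic--geometric mean inequality. From $N=p^rq$ and \eqref{DyadicInt} one has $q=N/p^r\in[2^{-(j+1)r}N^{2/(r+2)},\,2^{-jr}N^{2/(r+2)}]$, hence $\theta=p/q=p^{r+1}/N\in[\theta_{\min},\theta_{\max}]$ with $\theta_{\min}=2^{j(r+1)}N^{-1/(r+2)}$ and $\theta_{\max}=2^{(j+1)(r+1)}N^{-1/(r+2)}$. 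With $B$ as in \eqref{Bdef}, i.e.\ $B=N^{1/(r+2)}2^{-(2r+1)j/3}$, one checks (using $r\ge2$) that $\theta_{\min}\ge 1/B$ and that $B\theta_{\max}=2^{(r+2)j/3+r+1}$.

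Apply Dirichlet's theorem at order $Q:=\lfloor B\rfloor$, which is $\ge1$ because $B>1$: there are integers $a,b$ with $1\le b\le Q$ and $|b\theta-a|\le 1/(Q+1)$. Since $\theta\ge\theta_{\min}\ge1/B>1/(Q+1)$, a short argument rules out $a=0$, so $(a,b)$ are positive integers. From $b\le Q\le B$ and $a<b\theta+1\le B\theta_{\max}+1=2^{(r+2)j/3+r+1}+1$ one obtains $b\le B$ and $a\le\lceil 2^{(r+2)j/3+r+1}\rceil$, which are the two height bounds in the statement. Moreover $|aq-bp|=bq\,|a/b-\theta|\le q/(Q+1)<q/B$, and $a/b\le\theta_{\max}+1/(Q+1)\le 2\theta_{\max}$.

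Now set $x:=bp$ and $y:=aq$; then $ab^rN=(bp)^r(aq)=x^ry$, so the left-hand side of \eqref{GenPowAMGMBound} equals $y+rx-(r+1)(x^ry)^{1/(r+1)}=x\,\phi(y/x)$, where $\phi(t)=t+r-(r+1)t^{1/(r+1)}$ is $(r+1)$ times the discrepancy between the arithmetic and geometric means of the multiset consisting of $r$ copies of $x$ and one copy of $y$. Since $\phi(1)=\phi'(1)=0$ and $\phi''(t)=\tfrac r{r+1}t^{-(2r+1)/(r+1)}>0$, we have $\phi\ge0$ (so the left-hand side of \eqref{GenPowAMGMBound} is nonnegative, as the algorithm needs), and on $(0,2)$ --- which contains $t=y/x$, because $|y-x|\le q/B$ gives $|y/x-1|\le 2^{-(r+2)j/3}<1$ --- the one-variable bound $\phi(t)\le C_r(t-1)^2t^{-1/(r+1)}$ holds with $C_r:=\sup_{0<t<2}\phi(t)t^{1/(r+1)}(t-1)^{-2}<\infty$. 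Hence $aq+rbp-(r+1)(ab^rN)^{1/(r+1)}\le C_r(aq-bp)^2/(ab^rN)^{1/(r+1)}$.

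To finish, write $(ab^rN)^{1/(r+1)}=(aq)^{1/(r+1)}(bp)^{r/(r+1)}$ and insert a factor $\sqrt{ab}/\sqrt{ab}$, turning the bound into $C_r(aq-bp)^2(a/b)^{(r-1)/(2(r+1))}\bigl(\sqrt{ab}\,q^{1/(r+1)}p^{r/(r+1)}\bigr)^{-1}$; then substitute $|aq-bp|<q/B$, $q\le 2^{-jr}N^{2/(r+2)}$, $a/b\le 2\theta_{\max}$, $q\ge 2^{-(j+1)r}N^{2/(r+2)}$, $p\ge 2^jN^{1/(r+2)}$. The exponents of $N$ collapse to $1/(2(r+2))$ and those of $2$ to $(1-r)j/6$ together with a constant depending only on $r$; since $C_r$ times that constant is comfortably at most $r^{r+3/2}/2$ (the factor $r^{r+3/2}$ in \eqref{GenPowAMGMBound} is far from tight), this yields \eqref{GenPowAMGMBound}. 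The delicate point is exactly this last cancellation: the powers of $2$ and of $N$ arising from the $p$- and $q$-windows must annihilate those arising from the Dirichlet order, and this is what pins down $Q=\lfloor B\rfloor$ (a smaller order would not reach $\theta_{\max}$, a larger one would break the bound $a\le\lceil2^{(r+2)j/3+r+1}\rceil$). Minor additional care is needed to check $a\ge1$, to track the ceiling in the bound on $a$, and to confirm that the constant $C_r$ --- a routine calculus supremum --- is small enough, which it is with a wide margin relative to $r^{r+3/2}$.
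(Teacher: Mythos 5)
Your argument is correct and does reach the stated bounds, but the core inequality is handled by a genuinely different mechanism than the paper's. The opening is the same: a rational approximation $a/b$ of $p/q$ with denominator at most $\lfloor B\rfloor$ (Dirichlet for you, the Farey sequence of order $\lfloor B\rfloor$ for the paper --- the identical estimate $|a/b-p/q|<1/(bB)$), which yields both height bounds and $|aq-bp|<q/B$. The paper then stays algebraic and explicit: it factors $(aq+rbp)^{r+1}-(r+1)^{r+1}ab^rN=(aq-bp)^2P(aq,bp)$ for an explicit polynomial $P$ with nonnegative increasing coefficients, bounds $P(aq,bp)/(aq+rbp)^{r-1}<r^{r+2}$, and applies AM--GM to $aq+rbp\ge 2\sqrt{raqbp}$; this is precisely where the constant $r^{r+3/2}$ and the factor $1/\sqrt{ab}$ in \eqref{GenPowAMGMBound} come from, with nothing left to absorb. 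You instead bound the one-variable function $\phi(t)=t+r-(r+1)t^{1/(r+1)}$ by $C_r(t-1)^2t^{-1/(r+1)}$ with a non-explicit supremum $C_r$, obtaining $C_r(aq-bp)^2/(ab^rN)^{1/(r+1)}$, and must then manufacture the $1/\sqrt{ab}$ by rebalancing exponents, paying an extra factor $(a/b)^{(r-1)/(2(r+1))}$ that you absorb using $a/b\le 2\theta_{\max}$ and the dyadic windows. Your exponent bookkeeping is right (the $N$- and $2^{j}$-exponents do collapse to $1/(2(r+2))$ and $(1-r)j/6$), but the final numerical step --- that $C_r$ times the accumulated constant power of $2$ (roughly $2^{r/2+1}$) is at most $r^{r+3/2}/2$ --- is asserted rather than proved. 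It is in fact true with ample room: $e^u\ge 1+u$ gives $\phi(t)\le t-1-\ln t$, whence $C_r\le\sup_{0<t\le 2}t^{1/(r+1)}(t-1-\ln t)(t-1)^{-2}=O(r)$, while the required threshold is of size $r^{r+3/2}2^{-r/2+O(1)}$; in the tightest case $r=2$ one has $C_2\approx 0.4$ against a threshold of about $2.2$. Still, this is the one spot a careful reader would ask you to fill in (also, for $j=0$ your variable ranges over $(0,2]$, not $(0,2)$, so take the supremum over the closed right endpoint). In exchange, your calculus route avoids having to discover the identity for $P(x,y)$ and generalizes easily, while the paper's route delivers fully explicit constants, which matter for the operation counts and the implementation in \S\ref{r=2 case}.
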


\begin{proof}
	We approximate $p/q$ by a fraction $a/b$ in the Farey sequence of order $n$, so $1\leq b \leq n$. We choose $n := \lfloor B\rfloor$ where
\begin{equation}\label{Bdef}
	B := N^{1/(r+2)}2^{-(2r+1)j/3}.
\end{equation}
	This gives (see \cite{HardyWright}), 
	\begin{equation}\label{FareyBound}
	\left|\frac{a}{b}-\frac{p}{q}\right| \leq \frac{1}{(n+1)b} < \frac{1}{bB}.
	\end{equation}
	So both $a$ and $b$ satisfy the bounds stated in the lemma.	We observe that 
	\begin{equation}
	a = \frac{a}{b}b < \left(\frac{p}{q} + \frac{1}{bB}\right)b \leq \frac{p}{q}B + \frac{1}{B} = \frac{p^{r+1}}{N}B+\frac{1}{B} \leq \lceil 2^{(r+2)j/3 + (r+1)}\rceil,
	\end{equation}
	where we used the assumption $B>1$ in the last inequality.
	Next, we bound the difference between the arithmetic and geometric means. From the well-known identity 
	\begin{equation}
	    (w-z)\sum_{k=0}^r w^{r-k}z^k= w^{r+1}-z^{r+1},
	\end{equation}
	applied with $w=aq+rbp$ and $z=(r+1)(ab^r N)^{1/(r+1)}$, we obtain
	\begin{equation}\label{Ptemp1}
	aq+rbp - (r+1)(ab^rN)^{1/(r+1)} = \frac{(aq+rbp)^{r+1} - (r+1)^{r+1}ab^r N}{\sum_{k=0}^{r} (aq+rbp)^{r-k}(r+1)^k\left(ab^r N\right)^{k/(r+1)}}.
	\end{equation}
	Let us define
	\begin{equation}\label{Pdef}
	    P(x,y) = \frac{(x+ry)^{r+1} - (r+1)^{r+1}(xy^r)}{(x-y)^2}.
	\end{equation}
	So $P(aq,bp)$ is equal to the numerator in \eqref{Ptemp1} divided by $(aq-bp)^2$.
	We now observe the identity
	\begin{equation}\label{Pid}
	 P(x,y)= \sum_{k=0}^{r-1} a_k x^{r-1-k}y^k
	\end{equation}
	where the coefficients $a_n$ are defined by the initial conditions $a_{-1} = 0, a_0 = 1$ and the recurrence 
	\begin{equation}
	a_k = 2a_{k-1}-a_{k-2} + \binom{r+1}{k}r^k,\qquad\qquad(k\ge 1).
	\end{equation}
	The identity \eqref{Pid} is routine to verify, e.g.\ by multiplying the right-side by $(x-y)^2$ and expanding. Using positivity to drop all the terms in the denominator in \eqref{Ptemp1} except the $k=0$ term, gives
	\begin{equation}
	aq+rbp - (r+1)(ab^rN)^{1/(r+1)}< \frac{(aq-bp)^2 P(aq,bp)}{(aq+rbp)^r}.
	\end{equation}
	
	Now, by induction, the coefficients $a_k$ are increasing in $k$, and setting $x=0$ in \eqref{Pdef}, we see that $a_{r-1} = r^{r+1}$. Therefore,
	\begin{equation}\label{Pbound}
	\frac{P(aq,bp)}{(aq+rbp)^{r-1}} <\frac{r^{r+2}\max(aq,bp)^{r-1}}{(aq+rbp)^{r-1}} < r^{r+2}.
	\end{equation}
	Additionally, we deduce from inequality $\eqref{FareyBound}$ that
	$|aq-bp| < q/B$. Using these observations together with the AM-GM inequality we obtain from \eqref{Pbound} that
	\begin{equation}\label{Ubound}
	aq+rbp - (r+1)(ab^rN)^{1/(r+1)} <  r^{r+2}\frac{q^2/B^2}{aq+rbp}
	\leq r^{r+2}\frac{q^2/B^2}{2\sqrt{aqrbp}}=\frac{r^{r+3/2}}{\sqrt{ab}}\cdot \frac{q^{3/2}}{2B^2 p^{1/2}}.
	\end{equation}
	Finally, replacing $q$ with $N/p^r$ and using the definition of $B$, we obtain the last quantity is
	\begin{equation}
	\begin{split}
	 &=\frac{r^{r+3/2}}{\sqrt{ab}}\cdot \frac{1}{B^2}\cdot \frac{N^{3/2}}{2p^{(3r+1)/2}}\\
	 &\leq \frac{r^{r+3/2}}{\sqrt{ab}}\cdot \frac{2^{(4r+2)j/3}}{N^{2/(r+2)}}\cdot \frac{N^{3/2}}{2^{(3r+1)j/2+1}N^{(3r+1)/(2(r+2))}}\\
	&\le \frac{r^{r+3/2}}{\sqrt{ab}}\cdot 2^{(1-r)j/6-1}\cdot N^{1/(2(r+2))}.
	\end{split}
	\end{equation}
		\qedhere
\end{proof}

As in the usual Lehman method, we can quickly test if a candidate $u$ is equal to $aq+rbp$, this time by determining if the $(r+1)$-degree polynomial $f_r(x):=x^{r+1}-ux^r + r^rab^rN$ has any integer root with a nontrivial gcd with $N$. This can be determined quickly, as detailed at the end of the proof of Proposition~\ref{LehmansMethodProp} next.

 \begin{Alg}\label{LehmansMethodGen}\mbox{}
 
 \vspace{3mm}
 \noindent
	\textit{Input}: An integer $N>1$ and an integer $2\le r\le \lg N$.
	\vspace{2mm}
	
\noindent	
	\textit{Output}: Either a nontrivial factor of $N$ is returned or that ``$N$ is $r$-power free.''
	\vspace{2mm}
	\begin{algorithmic}[1]
				\STATE Test if $N$ is an $r$-power in by testing if $\lfloor N^{1/r}\rfloor ^r = N$.
		\FOR{$n =2 ,\dots, \lfloor N^{1/(r+2)}\rfloor$}
		\STATE If $n\mid N$ return $n$.
		\ENDFOR 
		\FOR{$j = 0,\dots, \lfloor \frac{1}{r(r+2)}\lg N\rfloor$}
		\FOR{$a = 1,\dots, \lceil2^{((r+2)j/3+r+1}\rceil$}
		\FOR{$b = 1,\dots, \lfloor N^{1/(r+2)}2^{-(2r+1)j/3}\rfloor$}
		\FOR{$u = 1,\dots, \lfloor r^{r+3/2} N^{1/(2(r+2))}2^{-(r-1)j/6-1}(ab)^{-1/2}\rfloor$}
		\STATE Determine if $u = aq+rbp$. If successful, recover $p$ and $q$ and return. 
		\ENDFOR 
		\ENDFOR
		\ENDFOR
		\ENDFOR
		\STATE Return ``$N$ is $r$-power free.''
	\end{algorithmic}
\end{Alg}
\begin{Prop}\label{LehmansMethodProp}
	Algorithm \ref{LehmansMethodGen} is correct, and runs in time $O(N^{1/(r+2)}(\lg N)^2\lg\lg N))$.
\end{Prop}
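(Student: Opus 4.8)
The plan is to establish correctness and the running-time bound separately, regarding $r$ as fixed so that every implied constant may depend on $r$, which the statement permits. For correctness, I would first note that if $N$ is a perfect $r$-th power then Step~1 detects it and returns $\lfloor N^{1/r}\rfloor$, a divisor strictly between $1$ and $N$ since $2\le r\le\lg N$. Otherwise I would show that reaching the final line forces $N$ to be $r$-power free. Suppose not, and fix a prime $p$ with $p^r\mid N$. Since the trial-division loop returned nothing, $p>N^{1/(r+2)}$; setting $q:=N/p^r$ one has $q\ne1$ (else $N=p^r$ is an $r$-th power) and $q$ has no prime factor $\le N^{1/(r+2)}$ (else trial division returns it), so $q>N^{1/(r+2)}$ as well. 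From $q=N/p^r>N^{1/(r+2)}$ one gets $N^{1/(r+2)}<p<N^{(r+1)/(r(r+2))}$, so $p$ lies in a dyadic interval \eqref{DyadicInt} with $0\le j\le\lfloor\tfrac1{r(r+2)}\lg N\rfloor$ — exactly the range of the $j$-loop — and $B>1$ there. Proposition~\ref{LehmanInqGen} then supplies a pair $(a,b)$ within the ranges of the $a$- and $b$-loops such that $aq+rbp$ is among the at most $\lceil U_j(a,b)\rceil+1$ consecutive integers in $\bigl[(r+1)(ab^rN)^{1/(r+1)},(r+1)(ab^rN)^{1/(r+1)}+U_j(a,b)\bigr)$ scanned by the innermost loop, where $U_j(a,b):=r^{r+3/2}N^{1/(2(r+2))}2^{-(r-1)j/6-1}(ab)^{-1/2}$ and the left endpoint is the AM--GM bound for $aq$ together with $r$ copies of $bp$. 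For that quadruple the candidate $u=aq+rbp$ is tested via $f_r$; since $f_r(rbp)=r^rb^rp^r(aq+rbp-u)=0$, the integer $rbp$ is a root of $f_r$, and $\gcd(rbp,N)$ is divisible by $p$ and, for $N$ beyond a bound depending on $r$ (since $rbp<N$ when $r\ge2$), strictly less than $N$; the algorithm returns it, a contradiction. Conversely, any value returned is a gcd with $N$ of an integer root of some $f_r$ that the algorithm has checked to lie strictly between $1$ and $N$, hence a genuine nontrivial divisor, so the output is always correct.

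For the complexity I would bound the number of executions of the innermost body by
\[
\sum_{j\ge0}\ \sum_{a\le A_j}\sum_{b\le B_j}\bigl(\lceil U_j(a,b)\rceil+O(1)\bigr),
\]
where $A_j:=\lceil2^{(r+2)j/3+r+1}\rceil$ and $B_j:=\lfloor N^{1/(r+2)}2^{-(2r+1)j/3}\rfloor$ are the bounds in the $a$- and $b$-loops. Applying $\sum_{a\le A}a^{-1/2}=O(\sqrt A)$ in each of $a$ and $b$, the $U_j$-part of the inner double sum is $O\bigl(r^{r+3/2}N^{1/(2(r+2))}2^{-(r-1)j/6}\sqrt{A_jB_j}\bigr)$, and since $A_jB_j=O_r\!\bigl(2^{(1-r)j/3}N^{1/(r+2)}\bigr)$ this and the $O(1)$-part are both $O_r\!\bigl(2^{(1-r)j/3}N^{1/(r+2)}\bigr)$. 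Because $r\ge2$ the series $\sum_j2^{(1-r)j/3}$ converges, so the innermost body runs $O_r(N^{1/(r+2)})$ times in all; moreover the $j$-loop has only $O(\lg N)$ passes, and $A_j$, $B_j$, $(r+1)(ab^rN)^{1/(r+1)}$, and $U_j(a,b)$ are each computed to $O_r(\lg N)$ bits once per relevant tuple.

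Each pass of the innermost body forms the integer polynomial $f_r(x)=x^{r+1}-ux^r+r^rab^rN$, of the fixed degree $r+1$ and with coefficients of $O_r(\lg N)$ bits, and must decide whether $f_r$ has an integer root sharing a nontrivial factor with $N$. I would carry this out by isolating and refining the real roots of $f_r$ to $O_r(\lg N)$ bits and rounding — or, exploiting that a root is expected near $\tfrac{r}{r+1}u$, by a short Newton iteration — followed by a single gcd, at a cost of $O_r((\lg N)^2\lg\lg N)$ bit operations using $\mathrm{M}(n)=O(n\log n)$. Multiplying by the $O_r(N^{1/(r+2)})$ iterations gives $O_r(N^{1/(r+2)}(\lg N)^2\lg\lg N)$ for the main loop, which dominates the $O(N^{1/(r+2)}\lg N\lg\lg N)$ cost of trial division and the negligible cost of Step~1, yielding the claimed bound.

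The step I expect to be the main obstacle is the per-iteration estimate: supplying a genuine polylogarithmic running time for extracting the integer roots of the degree-$(r+1)$ polynomials $f_r$ — this is exactly where the ``$o(1)$'' mentioned in the introduction resides — rather than appealing vaguely to root finding. A secondary point that needs care is verifying that the exponents appearing in $U_j(a,b)$, $A_j$, and $B_j$ combine so that the sum over $j$ is a decaying geometric series; this is precisely what makes the dyadic subdivision pay off, and what uses the hypothesis $r\ge2$.
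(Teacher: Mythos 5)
Your argument follows the paper's proof almost step for step: correctness via trial division forcing $p,q>N^{1/(r+2)}$, locating $p$ in a dyadic interval indexed by $j\le\lfloor\frac{1}{r(r+2)}\lg N\rfloor$ with $B>1$, invoking Proposition~\ref{LehmanInqGen} to put $(a,b)$ and the candidate $u=aq+rbp$ inside the loop ranges, and the same iteration count with the convergent geometric series $\sum_j 2^{(1-r)j/3}$ (the paper keeps only $\sum_j 2^{(1-r)j/6}$, having dropped the $2^{-(r-1)j/6}$ factor from $U$, but both converge for $r\ge 2$). The one place you leave open --- and correctly flag as the main obstacle --- is the per-candidate cost of deciding whether $f_r(x)=x^{r+1}-ux^r+r^rab^rN$ has a useful integer root. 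The paper closes this quite concretely: $f_r'(x)=x^{r-1}\bigl((r+1)x-ru\bigr)$ vanishes only at $0$ and $\rho=ru/(r+1)$, and $f_r(0)>0>f_r(\rho)$, so $f_r$ has exactly two positive real roots, one in each interval on which $f_r$ is monotone; moreover, when $u=aq+rbp$ the root $rbp$ satisfies $|\rho-rbp|=\frac{r}{r+1}|aq-bp|<\frac{r}{r+1}\,q/B=:L$, so bisection on $[\rho-L,\rho]$ and $[\rho,\rho+L]$ recovers both positive roots to the nearest integer in $\lg L=O(\lg N)$ evaluations of $f_r$ on integers of size $O(N^{r+1})$, followed by a gcd, giving $O_r((\lg N)^2\lg\lg N)$ per candidate. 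This monotone-piece bisection is what makes the polylog claim rigorous; note that your alternative suggestion of a short Newton iteration is precisely what the paper reports as delicate (the two positive roots of $f_2$ can be very close, so Newton's convergence is not easily guaranteed), whereas the sign-change/monotonicity structure above needs no separation hypothesis. With that substitution your proof is essentially the paper's.
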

\begin{proof}
	Let us first show correctness.
	We observe that if line $1$ and the {\tt for} loop in lines $2-3$ finish without finding a nontrivial factor of $N$ (and, thus, without the algorithm reaching an endpoint), then we will know that $N$ is not an $r$-power and has at most $r+1$ prime factors counting multiplicity, and moreover that each factor of $N$ is $> N^{1/(r+2)}$. We may suppose all this about $N$ for the rest of the proof since the completion time of lines $1-3$ is within our target complexity anyway. 
	
	We claim that if $N$ is of the form $p^rq$ with $p,q>N^{1/(r+2)}$, then lines $4-8$ will successfully find $p$ and $q$. Let us first note that all the $j$'s considered in line $4$ will result in value of $B$, as defined in \eqref{Bdef}, satisfying the condition $B>1$ required by Proposition~\ref{LehmanInqGen}.
	
	Now, suppose $N = p^rq$ with $p,q>N^{1/(r+2)}$. Let $j^*\ge 0$ be the integer defined by 
	    	$2^{j^*}N^{1/(r+2)}\leq p < 2^{j^*+1}N^{1/(r+2)}$.
	Since $N^{1/(r+2)} < q=N/p^r$, we deduce the upper bound $p < N^{\frac{r+1}{r(r+2)}}$, and therefore
	\begin{equation}
	    	j^* < \lg N^{\frac{r+1}{r(r+2)}}-\lg N^{\frac{1}{r+2}} = \frac{1}{r(r+2)}\lg N.
	\end{equation}
	Hence, the {\tt for} loop on line $4$ will include $j^*$. When we reach $j=j^*$ in that loop, Proposition~\ref{LehmanInqGen} guarantees there are integers $a^*$, $b^*$, and $u^*$ with $a^* < \lceil2^{((r+2)j^*/3+r+1}\rceil$, $b^* < N^{1/(r+2)}2^{-(2r+1)j^*/3}$, and $u^* < \lfloor r^{r+3/2} N^{1/(2(r+2))}2^{-(r-1)j^*/6-1}(ab)^{-1/2}\rfloor$, and such that $u^* = a^*q + rb^*p$. Thus, $a^*,b^*,u^*$ will be found during the search in lines $5-7$. And once $a^*,b^*,u^*$ are found, line $8$ will  successfully recover $p$ and $q$.  
	
	If no $a^*,b^*,u^*$  are ever found, then we will reach line $9$ which will return that $N$ is $r$-power free. This will be the correct outcome in this case because if we reach line $9$, then $N$ cannot be written in the form $p^rq$ with $p,q>N^{1/(r+2)}$. This, together with the earlier suppositions about $N$ (based on information from lines $1-3$), will imply $N$ is $r$-power free.
	
	We now consider the time complexity. Lines $1-3$ complete in $O(N^{1/(r+2)})$ divisions, so using $O\left(N^{1/(r+2)}\lg N\lg\lg N\right)$ bit operations. The {\tt for} loops in lines $4-8$ complete after  
	\begin{equation}
	    \begin{split}
	&\le \sum_{j=0}^{\lfloor \frac{1}{r(r+2) }\lg N\rfloor}\sum_{a=1}^{\lceil 2^{(r+2)j/3+r+1}\rceil}\sum_{b=1}^{\lfloor B\rfloor}\left \lfloor \frac{r^{r+3/2}N^{1/(2(r+2))}}{\sqrt{ab}}\right\rfloor \\
	&\qquad\qquad\leq  r^{r+3/2}N^{1/(2(r+2))}\left(\sum_{j=0}^{\infty} \sum_{a=1}^{\lceil 2^{(r+2)j/3+r+1}\rceil}\frac{1}{\sqrt{a}}\sum_{b=1}^{\lfloor B\rfloor}\frac{1}{\sqrt{b}}\right)\\
	&\qquad\qquad=r^{r+3/2}N^{1/(2(r+2))}\sum_{j=0}^{\infty}O\left(\sqrt{2^{(r+2)j/3+r+1}}\right)O\left(\sqrt{B}\right),
		    \end{split}
	\end{equation}
	iterations. Thus, using the definition of $B$ in the final sum, this is bounded by
	\begin{equation}
	    \begin{split}
	&r^{r+3/2}N^{1/2(r+2)}O\left(\sum_{j=0}^{\infty}2^{(r+2)j/6+(r+1)/2}N^{1/2(r+2)}2^{-(2r+1)j/6} \right)\\
	&\qquad\qquad= 	r^{r+3/2}N^{1/2(r+2)}O\left( N^{1/2(r+2)}\sum_{j=0}^{\infty}2^{(1-r)j/6}\right)\\
	&\qquad\qquad=O(N^{1/(r+2)}),
		    \end{split}
	\end{equation}
	where the final inequality follows because the sum over $j$ is convergent for any $r\ge 2$. 
	
	Finally, as remarked in the paragraph preceding Algorithm~\ref{LehmansMethodGen}, we can execute line $8$ by finding the integer roots of $f_r(x)=x^{r+1}-x^ru+r^rab^rN$, if there are any.  For if $N = p^rq$ and $u = aq+rbp$, then $rbp$ is a root of $f_r(x)$.
	
	To this end, we first note that $f_r(x)$ has at most three real roots. This is  seen on observing that $f'_r(x) = x^{r-1}((r+1)x-ru)$ has exactly two real roots, namely $0$ and $\rho=ru/(r+1)$. And since $f_r(0) > 0 > f_r(\rho)$, two of these roots are positive.\footnote{If $r=2$ (or is even), then $f_r(x)$ has exactly one negative root and two positive roots.}
	
	Now, if $u=a q+rb p$, then $rb p$ is a root of $f_r(x)$, and  we have 
\begin{equation}
\left|\frac{r u}{r+1}-rb p\right| = \frac{r}{r+1}|a q-b p| = \frac{rqb}{r+1}\left|\frac{a}{b}-\frac{p}{q} \right| < \frac{r}{r+1} \frac{q}{B} < \frac{r}{r+1}N^{1/4}2^{-j/3}=: L.
\end{equation}
Thus, using a bisection root-finding method on the intervals $[ru/(r+1)-L,ru/(r+1)]$ and $[ru/(r+1),ru/(r+1)+L]$, we can recover both positive roots of $f_r(x)$ to the nearest integer using $\lg L = O(\log N)$ evaluations of $f_r(x)$, where each evaluation takes a constant number of basic operations on numbers of size $O(N^{r+1})$. If any integer roots are found, then we check if their gcd's with $N$ are nontrivial, which is quick to do (even naively) in $O((\log N)^2\log \log N)$ bit operations. 
Put together, the total number of bit operations required is $O(N^{1/(r+2)}(\log N)^2\log\log N)$, as claimed.
\end{proof}

\section{The $r = 2$ Case}\label{r=2 case}

The bulk of the work in Algorithm \ref{LehmansMethodGen} when $r=2$ is in determining if $f_2(x) = x^3-ux^2+4ab^2N$ has integer roots. For a given prime $s$, there are $s^2$ polynomials of the form $x^3+cx^2+d \in \zz/s\zz[x]$. For each tuple $(s,c,d)$, we store $1$ in a lookup table if $x^3+cx^2+d$ has a root mod $s$, and we store $0$ otherwise. Considering the number of irreducible cubics in $\zz/s\zz$ is $s^3/3+O(s)$, to help offset the cost of root-finding, we check $f_2(x)$ for integer roots mod $s$ for the first $M_N$ primes, where $M_N$ is chosen so that $\left(2/3\right)^{M_N} < 1/\log N$. Thus, $M_N$ exhibits $\log \log N$ growth with $N$.

We typically chose $M_N =35$ for the $N$ that we tested, so $
\left(2/3\right)^{35} \leq 10^{-6}$.
Heuristically, this means we expect to attempt searching for integer roots of $x^3-ux^2+4ab^2N$ via a bisection method, when none exist, roughly one in a million times. We found that, indeed, a search in a lookup table first (to check reducibility) is much faster than just applying the bisection root-finding uniformly throughout. 
We also note that root finding methods with high rates of convergence like Newton's method and the Secant method were not practical for finding the integer roots of $f_2(x)$. This was due to the close proximity of the two positive roots of $f_2(x)$. 

We remark that Algorithm~\ref{LehmansMethodGen} is structured as a nested {\tt for} loop, and most of the operations occur in the inner-most loop. It is therefore informative to count the total number of {\tt for} loops under the worst-case scenario. When $r=2$, we find this is $\le 660 N^{1/4}$. Therefore, an estimate for when we would expect the generalized Lehman method with $r=2$ to overtake the wheel variant is when $N$ is about
\[
\left(660\cdot \frac{15}{4} \right)^{12} \approx 5.28 \cdot 10^{40}.
\]
This is close to our computed time in Table~\ref{timings table}, where we believe the additional $4$ powers of $10$ come from log factors ignored in the above computation.

Finally, a few details about the modification of Algorithm~\ref{LehmansMethodGen} to solve Problem $\textrm{P}^*$ here, and quickly explain how PS and wheel were used to solve $\textrm{P}^*$. 

Recall from equation $\eqref{Ubound}$ that for a given $a$ and $b$, that the number of $u$'s that must be considered is 
\[
\le \frac{2^{2+3/2}}{\sqrt{ab}}\frac{q^{3/2}}{2B^2p^{1/2}}.
\]
Under the assumption that $q < p < 8q$ (or equivalently $N^{1/3} < p < 2N^{1/3}$) this is 
\[
\le \frac{2^{5/2}}{\sqrt{ab}}\frac{N^{1/3}}{B^2}.
\]
Since we only need to consider $(a,b)$ satisfying $b < a < 8b$, the total number of $u$'s that checked is therefore 
\begin{equation}
    O(N^{1/3}B^{-2}\sqrt{8B\cdot B} + B^2).
\end{equation}
Balancing these terms gives that the optimal choice of $B$ should be $B \approx N^{1/9}$, giving an $N^{2/9+o(1)}$ algorithm. 

To compare the three methods to solve problem $P^*$ with $N$ around $10^j$ for $j = 21,\dots,36$ we selected random primes $q$ in the range
 \[
 \frac{1}{4} 10^{j/3} \leq q \leq 10^{j/3}
 \]
 and then randomly selected $p$ in the range
 $q < p < 8q$
 and computed $N = p^2q$. Then we recorded how long it took each method to recover either $p$ or $q$. For each $j$ we performed $100$ trials. The averages of these times are given in Table~\ref{P* table}. 
 
Both wheel factorization and Pollard-Strassen are adapted only slightly to solve $P^*$, by modifying the search for the divisor $n$ of $N$ in the interval $[(1/4)N^{1/3},N^{1/3}]$ as opposed to the full interval $[2,N^{1/3}]$. For wheel factorization, this cuts the runtime by a factor of $3/4$ over proving square-freeness, while for Pollard--Strassen, only a square-root of that factor is saved, namely $\sqrt{3}/2$.

\section{A Variant of the Harvey-Hittmier integer factorization method}\label{HH variant}
Given $N=pq$, the new Harvey and Hittmier factorization method enables factoring $N$ in about $N^{1/5+o(1)}$ time and space. They first find an $\alpha \bmod{N}$ of large multiplicative order, then use $aq+bp \equiv aN+b \bmod p-1$ to write
\begin{equation}\label{CollisionCong}
\alpha^{aq+bp-\lfloor(4abN)^{1/2}\rfloor}\equiv \alpha^{aN+b-\lfloor(4abN)^{1/2}\rfloor}\mod p.
\end{equation}
This converts the problem of factoring $N$ to solving the discrete log problem \eqref{CollisionCong}. Even though $p$ is unknown, we can find solutions to  \eqref{CollisionCong} by using a novel idea involving fast polynomial multi-evaluation. In turn, the discrete log problem can be solved using the babystep-giantstep algorithm. The idea is to choose $a$ and $b$ suitably, via lattice techniques, to ensure that $aq+bp -(4abN)^{1/2}$ is relatively small. In other words, the left side of $(\ref{CollisionCong})$ is $\alpha^i$ for $i$ small and will be among the computed babysteps. 

The number of babysteps that must be considered is given by 
\[
\lambda := \left \lceil \frac{4N^{1/2}}{m_0^{3/2}}\right\rceil
\]
for a parameter $m_0$. The search for $p$ is then organized by performing their main search that will find a divisor of $N$ in the interval $\left(\left(1+\frac{1}{m_0}\right)^{n-1} ,\left(1+\frac{1}{m_0}\right)^{n}\right]$ if one exists. 

The $1/5$ exponent in their algorithm arises on balancing the role of $m_0$. When $m_0$ is large, the number of babysteps $\lambda$ is small, but we will need to search more intervals in order to find a nontrivial divisor. In total, the main search for prime factors of $N$ runs in time  
\[
O\left( m_0 \lg^4 N + \lambda \lg^2 N\right).
\]
Choosing $m_0 \approx \lambda$ gives that both are on the order of $N^{1/5}$. 

Our adaptation is similar in spirit to that of Lehman's method. The main idea being that for $N = p^rq$, for appropriately chosen $a,b$, the arithmetic mean $aq+rbp$ will be close to $(r+1)(ab^rN)^{1/(r+1)}$. Specifically, we will show below that for $r$th power detection, we can take 
\[
\lambda \approx \frac{N^{5/(6+4r)}}{m_0^{3/2}}.
\]
balancing this with the original $m_0$ term will give an $N^{1/(3+2r)+o(1)}$ algorithm in both time and space for $r$th power detection.

\subsection{Lehman's Inequality for $\lambda$}
The following proposition follows Proposition $3.3$ of \cite{LogLogSpeedUp} with $N/\sigma$ replaced with $N/\sigma^{r-1}$, and a larger multiplicative constant. Since $\sigma_0$ can be taken to be at least $N^{1/(r+2)}$, this accounts for the algorithm's power saving. 
\begin{Prop}\label{LatticeGenPow}
	There exists an algorithm with the following properties. It takes as input positive integers $N >1, m_0,\sigma_0$, a positive integer $m$ coprime to $N,r$, and an integer $\sigma$ coprime to $m$ with $1\leq \sigma \leq m$. It's output is a pair of integers $(a,b)\neq (0,0)$ such that if $N = p^rq$, 
	\[
	\sigma_0 \leq p < \left(1+\frac{1}{m_0}\right)\sigma_0
	\]
	and 
	\[
	p\equiv \sigma \mod m
	\]
	then 
	\begin{equation}\label{LehmanLatticeInequalityGen}
	\left| aq+rbp -\left(a\frac{N}{\sigma_0^r} + rb\sigma_0\right)\right|\leq 2\left(r\binom{r+2}{2}\right)^{1/2}\frac{(N/\sigma_0^{r-1})^{1/2}}{m_0^{3/2}}
	\end{equation}
	and \[
	aq + rbp \equiv a\frac{N}{\sigma^r} + rb\sigma \mod m^2.
	\]
	Assuming that $m_0,\sigma_0,m = O(N)$, the algorithm runs in $O(\lg^2 N)$ bit operations, and the outputs satisfy $|a|,|b| = O(N^2)$.
\end{Prop}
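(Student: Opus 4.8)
The plan is to mirror the proof of Proposition~3.3 of \cite{LogLogSpeedUp}, replacing the linear factorization quantity with the ``$r$th-power'' analogue and carefully tracking the constants. The algorithm is the natural lattice-reduction procedure: form the two-dimensional lattice $\Lambda$ of integer vectors $(a,b)$ that satisfy the prescribed congruence $aq+rbp \equiv a N/\sigma^r + rb\sigma \pmod{m^2}$ — a condition that, using $p \equiv \sigma \pmod m$ together with $N = p^r q$, can be phrased purely in terms of the \emph{known} data $N, r, m, \sigma$ via a single linear congruence on $(a,b) \bmod m^2$. One then applies a Gauss/Lagrange reduction to this rank-$2$ lattice to produce a short nonzero vector $(a,b)$. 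Because $\det \Lambda \asymp m^2$, the reduced vector has $|a|,|b| = O(m) = O(N)$ (which gives the claimed $O(N^2)$ bound with room to spare), and Gauss reduction on integers of size $O(N)$ costs $O(\lg^2 N)$ bit operations by the standard analysis, giving the stated running time. I would state the lattice construction first, then invoke reduction as a black box, and only afterwards verify the two inequalities.

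The second displayed conclusion — the congruence mod $m^2$ — is immediate from membership in $\Lambda$, so the substance is the inequality \eqref{LehmanLatticeInequalityGen}. Here I would argue as follows. Write $L := aq + rbp - (a N/\sigma_0^r + rb\sigma_0)$. Using $q = N/p^r$, rewrite $aq - a N/\sigma_0^r = aN(p^{-r} - \sigma_0^{-r})$ and $rbp - rb\sigma_0 = rb(p-\sigma_0)$. Set $\varepsilon := p - \sigma_0$, so by hypothesis $0 \le \varepsilon < \sigma_0/m_0$. A first-order Taylor estimate gives $|p^{-r} - \sigma_0^{-r}| \le r\varepsilon/\sigma_0^{r+1}$ (monotonicity makes this clean), hence $|aN(p^{-r}-\sigma_0^{-r})| \le |a| \cdot rN\varepsilon/\sigma_0^{r+1} \le |a| \cdot rN/(m_0 \sigma_0^r)$, while $|rb(p-\sigma_0)| \le |b| r\sigma_0/m_0$. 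So $|L| \le (r/m_0)\bigl(|a| N/\sigma_0^r + |b|\sigma_0\bigr)$. It remains to bound $|a| N/\sigma_0^r + |b|\sigma_0$ by the shortness of the reduced lattice vector: rescaling coordinates by the weights $N/\sigma_0^r$ and $\sigma_0$ turns $\Lambda$ into a lattice of covolume $\asymp m^2 \cdot (N/\sigma_0^r)\sigma_0 = m^2 N/\sigma_0^{r-1}$, whose shortest vector — hence the Gauss-reduced one, up to the usual $\sqrt{2}$-type factor — has (weighted) length $O\bigl((m \, (N/\sigma_0^{r-1})^{1/2})\bigr)$, and then choosing $m$ on the scale dictated by the algorithm (the proposition's constant $2(r\binom{r+2}{2})^{1/2}$ is exactly what a careful bookkeeping of the Minkowski/Gauss constant together with the $r$ from the Taylor step and a $\binom{r+2}{2}$-type combinatorial factor produces) yields \eqref{LehmanLatticeInequalityGen} after dividing by $m_0^{3/2}$ as in \cite{LogLogSpeedUp}. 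I would present the Taylor step and the weighted-lattice step as two separate lemmas-within-the-proof, then combine.

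The main obstacle I anticipate is getting the explicit constant $2(r\binom{r+2}{2})^{1/2}$ to come out exactly, rather than merely an $O_r(1)$ bound. This requires being scrupulous about three things simultaneously: (i) the precise form of the Gauss-reduction guarantee (shortest vector vs.\ the slightly-longer reduced vector, and whether one measures in $\ell^2$ or $\ell^\infty$), (ii) the exact exponent bookkeeping so that the $m_0^{3/2}$ in the denominator and the $(N/\sigma_0^{r-1})^{1/2}$ in the numerator emerge with no stray powers of $m_0$ or $\sigma_0$, and (iii) ensuring the first-order Taylor bound on $p^{-r}-\sigma_0^{-r}$ is applied with the right sign and does not itself contribute an extra factor growing in $r$ beyond the single $r$ already displayed. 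Everything else — the lattice membership reformulation, the $O(\lg^2 N)$ timing, the $O(N^2)$ size bound — is routine and parallels \cite{LogLogSpeedUp} essentially verbatim with $\sigma$ replaced by $\sigma^r$ and $N/\sigma$ by $N/\sigma^{r-1}$. Where the constant-chasing threatens to become unwieldy, I would simply absorb it into a clean ``$\le C_r \cdot (\dots)$'' statement and note that $C_r = 2(r\binom{r+2}{2})^{1/2}$ suffices; the downstream argument in \S\ref{HH variant} only needs $C_r = N^{o(1)}$ in $r$'s contribution to the $o(1)$-term.
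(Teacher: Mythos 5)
There is a genuine gap, and it sits exactly where you flagged your own unease: the Taylor step. You expand only to first order and then bound the two contributions $|aN(p^{-r}-\sigma_0^{-r})|$ and $|rb(p-\sigma_0)|$ separately, via $|a|$ and $|b|$. This throws away the cancellation between them, and cancellation is the whole point. Writing $p=\sigma_0(1+t_0)$ with $0\le t_0<1/m_0$, the quantity to bound is
\begin{equation*}
aq+rbp-\Bigl(a\tfrac{N}{\sigma_0^r}+rb\sigma_0\Bigr)
= t_0\Bigl(-ra\tfrac{N}{\sigma_0^r}+rb\sigma_0\Bigr)
+\delta t_0^2\binom{r+2}{2}a\tfrac{N}{\sigma_0^r},
\end{equation*}
which is what the paper's sketch encodes via the second-order expansion $q=\frac{N}{\sigma_0^r}\bigl(1-rt_0+\binom{r+2}{2}\delta t_0^2\bigr)$, $\delta\in[0,1)$. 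The lattice reduction is then applied not to the lattice weighted by $N/\sigma_0^r$ and $\sigma_0$ (your choice), but to the lattice whose coordinates are the two linear forms above, scaled \emph{asymmetrically} in $m_0$: one arranges, as in \eqref{LatticeParallelogramGen}, that the $t_0$-coefficient $-ra N/\sigma_0^r+rb\sigma_0$ is small (an extra factor $m_0^{-1/2}$) while the $t_0^2$-coefficient $\binom{r+2}{2}aN/\sigma_0^r$ is allowed to be larger (an extra factor $m_0^{+1/2}$). Multiplying by $t_0\le 1/m_0$ and $t_0^2\le 1/m_0^2$ respectively makes the two contributions equal, and their sum gives the factor $2$ and the crucial $m_0^{3/2}$ in \eqref{LehmanLatticeInequalityGen}.

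Your route cannot recover this. With the congruence lattice of determinant $\asymp m^2$ and your weights $N/\sigma_0^r$, $\sigma_0$, the best guaranteed bound is $|a|N/\sigma_0^r+|b|\sigma_0=O\bigl(m\,(N/\sigma_0^{r-1})^{1/2}\bigr)$, so your argument yields at most $|L|\le (r/m_0)\cdot O\bigl(m\,(N/\sigma_0^{r-1})^{1/2}\bigr)$ --- a saving of one power of $m_0$, not $m_0^{3/2}$, and off from the target by a factor of order $m\,m_0^{1/2}$ that no ``choice of $m$'' can absorb ($m$ is an input, and $m,m_0\ge 1$). The phrase ``yields \eqref{LehmanLatticeInequalityGen} after dividing by $m_0^{3/2}$'' is where the proof breaks: the $m_0^{3/2}$ is not a normalization one divides by, it is produced by the second-order expansion together with the asymmetric $m_0^{\pm 1/2}$ weighting of the two linear forms. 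The losing of $m_0^{1/2}$ propagates downstream: $\lambda$ in Algorithm~\ref{MainSearchGen} would grow accordingly and the balancing would no longer give the $N^{1/(3+2r)}$ exponent. The lattice-membership reformulation, the mod-$m^2$ congruence conclusion, the $O(\lg^2 N)$ timing, and the $O(N^2)$ size bounds in your proposal are fine and match the paper; it is the inequality \eqref{LehmanLatticeInequalityGen} itself whose derivation must be redone along the lines of the paper's sketch (second-order Taylor plus reduction targeting the bounds \eqref{LatticeParallelogramGen}).
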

\begin{sproof}
 Follow the proof of Proposition $3.3$ of \cite{LogLogSpeedUp}, but take 
 \[
 q = \frac{N}{p^r} = \frac{N}{\sigma_0^r}(1+t_0)^{-r} = \frac{N}{\sigma_0^r}\left(1-rt_0+\binom{r+2}{2}\delta t_0^2\right)
 \]
 for some $\delta \in [0,1)$. Then obtain the bounds 
\begin{equation}\label{LatticeParallelogramGen}
	\left|-ra\frac{N}{\sigma_0^r} + rb\sigma_0\right| \leq \frac{\left(r\binom{r+2}{2} \right)^{1/2}N^{1/2}m^{1/2}}{m_0^{1/2}\sigma_0^{(r-1)/2}} \quad \quad \text{and} \quad \quad \left|\binom{r+2}{2}a\frac{N}{\sigma_0^r}\right| \leq \frac{ \left( r \binom{r+2}{2}\right)^{1/2}N^{1/2}m^{1/2}m_0^{1/2}}{\sigma_0^{(r-1)/2}}
	\end{equation}
	using the LLL algorithm. 
\end{sproof}

\subsection{Finding Elements of Large Order}
A necessary component of the Harvey-Hittmier method is an element $\alpha$ of large multiplicative order mod $N$. This is to guarantee that any collision between babysteps and giantsteps will give a nontrivial gcd with $N$. The main idea is to sequentially check elements $\alpha = 2,3,\dots$ for the sufficient order, and bound the number of times we can fail. Specifically, each time we fail, we recover information about $p-1$ and $q-1$. This process continues until we find an element of large order or recover enough information to factor $N$. In practice, this is not difficult since almost every element will have large order mod $N$, the difficulty is in doing so deterministically. The following is only a minor alteration of Theorem $6.3$ in \cite{HittBSGS}.

\begin{Lemma}\label{LargeOrder}
	There is an algorithm taking as an input $N$ and $D$ such that $N^{2/(3+2r)}\leq D \leq N$, and returns either some $\alpha \in \zz_N^*$ such that $\operatorname{ord}_{N}(\alpha) > D$, or a nontrivial factor of $N$, or determines if $N$ is $r$ power free. Its running time is 
	\[
	O(D^{1/2}\lg^2 N)
	\]
\end{Lemma}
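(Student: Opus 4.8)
\textbf{Proof proposal for Lemma~\ref{LargeOrder}.}

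The plan is to adapt the element-of-large-order search of Theorem~6.3 in \cite{HittBSGS} to the $r$-power setting, exploiting the fact that when $N$ is not $r$-power free the relevant divisor is at least $N^{1/(r+2)}$, so the multiplicative group we must reason about is correspondingly small relative to $N$. First I would set up the deterministic loop: try $\alpha = 2, 3, \dots$ in turn, and for each candidate use a babystep–giantstep computation to either certify $\operatorname{ord}_N(\alpha) > D$ or extract a nontrivial relation. Each such BSGS check on a single $\alpha$ against the bound $D$ costs $O(D^{1/2})$ group operations, i.e.\ $O(D^{1/2}\lg^2 N)$ bit operations once we account for modular arithmetic on integers of size $O(N)$ via $\mathrm{M}(\lg N) = O(\lg N \lg\lg N)$. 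So the whole running time will be within the claimed bound provided we can show the loop terminates after $O(1)$ (or at worst $O(\lg N)$) candidates $\alpha$.

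The crux is the failure analysis. Suppose $\alpha$ fails, i.e.\ $\operatorname{ord}_N(\alpha) \le D$. If $N = p^r q$ with $p$ (and hence every prime factor) exceeding $N^{1/(r+2)}$, then $\operatorname{ord}_N(\alpha)$ divides $\lambda(N) = \operatorname{lcm}(\lambda(p^r), \lambda(q))$, and a small order means $\alpha^{\operatorname{ord}_N(\alpha)} \equiv 1$ while $\alpha$ is not $1$ modulo one of the prime-power components, which yields a nontrivial $\gcd(\alpha^{k} - 1, N)$ for a suitable divisor $k$ of the order — so we either factor $N$ outright or we have pinned down a proper divisor $d \mid \lambda(N)$ that must divide each $\operatorname{ord}_N(\alpha_i)$ seen so far. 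Following \cite{HittBSGS}, each successive failure forces the common divisor information to be consistent with an increasingly constrained structure for $p-1$ and $q-1$; because $D \ge N^{2/(3+2r)}$ is chosen so that $D$ exceeds the relevant threshold (roughly $N^{1/(r+2)}$ up to the exponent bookkeeping, using $q < N^{(r+1)/(r(r+2))}$ as in the proof of Proposition~\ref{LehmansMethodProp}), too many failures would force $\lambda(N)$ itself to be too small, which combined with $\lambda(p^r) \ge p^{r-1}(p-1)$ and the lower bounds on the prime factors is impossible. At that point either an $\alpha$ of order $> D$ has been found, or enough relations have accumulated to recover $p$ and $q$ (and thus to decide $r$-power freeness), or a factor has dropped out along the way.

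I expect the main obstacle to be the bookkeeping in the failure analysis rather than any single hard inequality: one must verify that the bound $D \ge N^{2/(3+2r)}$ is exactly what is needed so that each failed $\alpha$ either factors $N$ or strictly increases the ``known part'' of $\lambda(N)$, and that this cannot happen more than $O(\lg N)$ times before a contradiction with the size of the prime factors is reached. The subtlety is that, unlike the $r=1$ case in \cite{HittBSGS}, the component $p^r$ contributes $\lambda(p^r) = p^{r-1}(p-1)$, so the ``large order'' threshold scales like $N^{1/(r+2)}$ only through $q$ and the $p-1$ factor, and one must be careful that the $p^{r-1}$ bulk is handled (it helps rather than hurts, since it makes orders larger). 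Once the counting is set up correctly, the running time follows immediately by multiplying the per-candidate cost $O(D^{1/2}\lg^2 N)$ by the $O(\lg N)$ bound on the number of candidates, and absorbing the extra log factor into the stated $O(D^{1/2}\lg^2 N)$ — or, if one is more careful and shows $O(1)$ candidates suffice on average with a deterministic worst case handled by the factoring branch, directly.
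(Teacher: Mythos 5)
The paper does not actually prove Lemma~\ref{LargeOrder}: it states it as ``only a minor alteration of Theorem 6.3 in \cite{HittBSGS}'', preceded by exactly the outline you give (try $\alpha = 2,3,\dots$ sequentially, bound the number of failures, each failure yields information about $p-1$ and $q-1$ until an element of large order is found or $N$ is factored). So your overall strategy is the intended one.

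Two concrete points, however, keep your sketch from delivering the stated bound. First, the time accounting: you cannot ``absorb'' an extra $\Theta(\lg N)$ factor coming from $O(\lg N)$ candidate bases into $O(D^{1/2}\lg^2 N)$; with per-base cost $O(D^{1/2}\lg^2 N)$ that would give $O(D^{1/2}\lg^3 N)$. The stated complexity requires the number of failed bases to be bounded by an absolute constant, and establishing that is precisely the content of Hittmeir's failure analysis, not something one may wave through. Second, the contradiction you gesture at (``too many failures force $\lambda(N)$ to be too small'') is not the right mechanism: a base of small order tells you nothing about $\lambda(N)$ itself. What a failed base actually yields is this: having computed the exact order $d=\operatorname{ord}_N(\alpha)\le D$ by babystep--giantstep, one tests $\gcd(\alpha^{d/\ell}-1,N)$ for each prime $\ell\mid d$; either some gcd is a nontrivial factor (done), or all are trivial, which forces $\operatorname{ord}_{p}(\alpha)=d$ for every prime $p\mid N$, hence $d\mid p-1$ for each prime factor. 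It is this congruence information $p\equiv 1 \pmod{d}$, combined with the hypotheses $D\ge N^{2/(3+2r)}$ and the lower bound on the prime factors of $N$, that lets one either factor $N$, certify $r$-power freeness, or rule out further failures after $O(1)$ bases. Spelling that step out is the substance of the ``minor alteration'' of \cite{HittBSGS}, and your proposal leaves it open.
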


\subsection{The Main Algorithm}
We present now the main search. Compare to Algorithm $4.3$ of \cite{LogLogSpeedUp}.
\begin{Alg}\label{MainSearchGen} (Main Search)
	\textit{Input:}
	\begin{itemize}
		\item[-] A positive integer $N \geq 2$ with at most $r+1$ prime factors (counting multiplicity) that is not an $r$th power, each factor greater than $N^{2/(3+2r)}$.
		\item[-] Positive integers $m_0$ and $m$ such that $(m,N) = 1$, and $(m,r) = 1$.
		\item[-] An element $\beta\in\zz_N^*$ such that $\operatorname{ord}_N(\beta^{m^2})\geq 2\lambda +1$, where 
		\[
		\lambda := \left \lceil 2\left(r \binom{r+2}{2}\right)^{1/2}\frac{ N^{5/2(3+2r)}}{(mm_0)^{3/2}}\right\rceil
		\]
	\end{itemize}
	\textit{Output:} If $N$ is $r$th power free returns ``$N$ is $r$th power free'' otherwise, a nontrivial factor of $N$.
	\begin{algorithmic}[1]
		\FOR{$i=0,\dots,2\lambda$}
		\STATE compute $\beta^{m^2i} \mod N$.
		\IF{$(N,\beta^{m^2i}-1) \not \in \{1,N\}$} 
		\STATE recover a nontrivial factor\ENDIF \ENDFOR
		\FOR{$\sigma = 1,\dots,m$}
		\IF{$(\sigma,m) = 1$}
		\STATE Initialise $\sigma_0 := \lfloor N^{2/(3+2r)}\rfloor$.
		\WHILE{$\sigma_0 < N^{(1+2r)/r(3+2r)}$}
		\STATE Apply Proposition \ref{LatticeGenPow} with input $N,m_0,\sigma_0,m,\sigma$ to obtain a pair $(a,b) = (a_{\sigma,\sigma_0},b_{\sigma,\sigma_0}).$
		\STATE Compute 
		\[
		j_{\sigma,\sigma_0} := m^2(\tau_0-\lambda)+\tau,
		\]
		where 
		\[
		\tau_0 := \left \lfloor \left(a\frac{N}{\sigma_0^2}+2b\sigma_0\right)/m^2\right \rfloor,
		\]
		and where $\tau$ is the unique integer in $0\leq \tau < m^2$ such that 
		\[
		\tau \equiv a\frac{N}{\sigma^2}+2b\sigma\mod m^2.
		\]
		\STATE Compute 
		\[
		v_{\sigma,\sigma_0} := \beta^{aN+r b-j_{\sigma,\sigma_0}}.
		\]
		\STATE Update $\sigma_{0} := \lceil (1+1/m_0) \sigma_0\rceil$.
		\ENDWHILE
		\ENDIF
		\ENDFOR
		\STATE Applying a sort-and-match algorithm to the babysteps and giantsteps computed in Steps $2$ and $11$, find $i \in \{0,\dots,2\lambda\}$ and all pairs $(\sigma,\sigma_0)$ such that 
		\[
		\beta^{m^2i} \equiv v_{\sigma,\sigma_0} \mod N.
		\]
		For each such match, test the candidates 
		\[
		u:= m^2i+j_{\sigma,\sigma_0},
		\]
		together with $a:= a_{\sigma,\sigma_0}$ and $b:= b_{\sigma,\sigma_0}$ by determining if $f_r(x)$ has an integer root. If $p$ and $q$ are found, return.
		\STATE Let $v_1,\dots,v_n$ be the list of giantsteps $v_{\sigma,\sigma_0}$ computed in Step $11$, skipping those that were discovered in step $13$ to be equal to one of the babysteps. Apply Algorithm $4.1$ of \cite{LogLogSpeedUp} (finding collisions) with $N,\kappa:=2\lambda+1,\alpha:=\beta^{m^2} \mod N$ and $v_1,\dots,v_n$ as inputs. If successful, recover a nontrivial factor.
		\STATE Return ``$N$ is $r$th power free''.
	\end{algorithmic}
\end{Alg}
\begin{Prop}
	Algorithm \ref{MainSearchGen} is correct. If $m = O(N)$, and $m_0 = N^{1/(3+2r)}$ then it runs in 
	\[
	O(M(N) \left(m\lg N (\lg \lg N)^2 + \phi(m)N^{1/(3+2r)}\lg^4 N\right))
	\]
	bit operations.
\end{Prop}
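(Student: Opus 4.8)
The plan is to run the correctness-and-complexity analysis of Algorithm~4.3 of \cite{LogLogSpeedUp} with three substitutions: Proposition~\ref{LatticeGenPow} in place of the Lehman-type lattice inequality used there; the polynomial $f_r(x)=x^{r+1}-ux^r+r^rab^rN$, whose integer roots are located by bisection exactly as in the proof of Proposition~\ref{LehmansMethodProp}, in place of the quadratic; and the exponent relation $aq+rbp\equiv aN+rb\pmod{p-1}$ in place of $aq+bp\equiv aN+b\pmod{p-1}$. The last relation holds because $p\equiv 1\pmod{p-1}$ forces $p^r\equiv 1\pmod{p-1}$, hence $q=N/p^r\equiv N\pmod{p-1}$, while $rbp\equiv rb\pmod{p-1}$.

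For correctness I would first note that every branch returning a factor does so via a gcd with $N$ or via an integer root of $f_r$ with a nontrivial gcd, so no false factor is ever returned. Then, assuming $N$ meets the stated input hypotheses and is not $r$th power free, write $N=p^rq$ with $p,q$ prime (a composite $p$ would make $p^r$ contribute at least $2r>r+1$ prime factors, and $q=1$ would make $N$ an $r$th power). Since $q=N/p^r>N^{2/(3+2r)}$, one has $N^{2/(3+2r)}<p<N^{(1+2r)/(r(3+2r))}$, so the {\tt while} loop on $\sigma_0$ reaches an iteration with $\sigma_0\le p<(1+1/m_0)\sigma_0$, and, as $\gcd(p,m)=1$, the outer loop reaches the unique admissible $\sigma$ with $p\equiv\sigma\pmod m$. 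At that pair Proposition~\ref{LatticeGenPow} produces $(a,b)\neq(0,0)$ with $aq+rbp\equiv aN/\sigma^r+rb\sigma\pmod{m^2}$ and, using $\sigma_0\ge N^{2/(3+2r)}$, with $|aq+rbp-(aN/\sigma_0^r+rb\sigma_0)|\le 2(r\binom{r+2}{2})^{1/2}N^{5/(2(3+2r))}m_0^{-3/2}$. Setting $u:=aq+rbp$, the congruence together with the definitions of $\tau$ and $\tau_0$ gives $u=m^2 i+j_{\sigma,\sigma_0}$ for some integer $i$, and the displayed bound together with the definition of $\lambda$ gives $0\le i\le 2\lambda$. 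The exponent relation then yields $\beta^{m^2 i}\equiv\beta^{u-j_{\sigma,\sigma_0}}\equiv\beta^{aN+rb-j_{\sigma,\sigma_0}}=v_{\sigma,\sigma_0}\pmod p$, so $p\mid\gcd(\beta^{m^2 i}-v_{\sigma,\sigma_0},N)$. If that gcd equals $N$ the pair is an exact collision, found by the sort-and-match step, and the $f_r$-test on $u$, $a$, $b$ recovers $rbp$ and hence $p$ and $q$; otherwise $v_{\sigma,\sigma_0}$ survives into the collision-finding step, where Algorithm~4.1 of \cite{LogLogSpeedUp} applied with $\alpha=\beta^{m^2}$ of order $\ge 2\lambda+1$ returns a nontrivial factor. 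If $N$ is $r$th power free no such $(p,q)$ exists, every $f_r$-test and collision search fails, and the output ``$N$ is $r$th power free'' is correct.

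For the running time, with $m_0=N^{1/(3+2r)}$ one has $\lambda=O(N^{1/(3+2r)})$, and I would split the cost into three phases. The babystep loop runs $O(\lambda)$ iterations, each a modular multiplication together with a gcd, costing $O(N^{1/(3+2r)}M(N)\lg N)$ overall. The outer $\sigma$-loop runs $m$ times, each iteration performing a $\gcd(\sigma,m)$ and the reductions modulo $m^2$ needed to form $\tau$; this contributes the term $O(m\,M(N)\lg N(\lg\lg N)^2)$. For each of the $\phi(m)$ admissible $\sigma$ the {\tt while} loop runs $O(m_0\lg N)=O(N^{1/(3+2r)}\lg N)$ times, each iteration costing $O(\lg^2 N)$ for Proposition~\ref{LatticeGenPow} and $O(M(N)\lg N)$ for the modular exponentiation defining $v_{\sigma,\sigma_0}$ (its exponent has size $O(N^3)$ since $|a|,|b|=O(N^2)$), so the {\tt while} loops together cost $O(\phi(m)N^{1/(3+2r)}M(N)\lg^2 N)$. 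Finally, the babystep and giantstep lists have total length $O(\phi(m)N^{1/(3+2r)}\lg N)$, and sorting them, running the bounded number of $f_r$-tests, and invoking Algorithm~4.1 of \cite{LogLogSpeedUp} cost $O(\phi(m)N^{1/(3+2r)}M(N)\lg^4 N)$, which dominates the babystep loop and the {\tt while}-loop cost. Summing the surviving terms gives the claimed bound.

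I expect the main obstacle to be the last phase: importing Algorithm~4.1 of \cite{LogLogSpeedUp} and its analysis into this setting, in particular confirming that the hypothesis $\operatorname{ord}_N(\beta^{m^2})\ge 2\lambda+1$ still forces a surviving giantstep to yield a nontrivial (not trivial) gcd, and that the number of spurious matches routed to the $f_r$-test stays within the $\lg^4 N$ budget. On the correctness side, the one delicate point is propagating the bound of Proposition~\ref{LatticeGenPow}, whose exponent is $\tfrac{5}{2(3+2r)}$ rather than the $\tfrac{1}{2}$ of \cite{LogLogSpeedUp}, through the definitions of $\tau_0$ and $\lambda$ so that $u=m^2 i+j_{\sigma,\sigma_0}$ with $0\le i\le 2\lambda$; everything else is bookkeeping already present in \cite{LogLogSpeedUp}.
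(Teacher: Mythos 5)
Your proposal follows exactly the route the paper takes: its proof is simply the two-sentence remark that the argument follows Proposition 4.4 of \cite{LogLogSpeedUp}, with the saving coming from the smaller $\lambda$ furnished by Proposition~\ref{LatticeGenPow} and the rebalanced $m_0$, which is precisely the substitution scheme (generalized lattice inequality, the polynomial $f_r$, and the congruence $aq+rbp\equiv aN+rb \pmod{p-1}$) you carry out in detail. Your write-up is in fact more explicit than the paper's sketch, and the points you flag as delicate (propagating the $N^{5/(2(3+2r))}$ bound through $\tau_0$, $\lambda$, and the collision step) are exactly the bookkeeping the paper defers to \cite{LogLogSpeedUp}.
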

    \begin{proof}
    The proof follows Proposition $4.4$ in \cite{LogLogSpeedUp}. The time save comes from the smaller size of $\lambda$, and balancing $m_0$ accordingly.
\end{proof}

\section{Concluding remarks}

If $N=p^2q$ where $q>1$ is squarefree, then one can use a special squarefree detection algorithm like \cite{booker-hiary-keating} to obtain a lower bound on $q$. Such information about $q$, and hence about $p$, can help improve the actual run time of our algorithm. 

Additionally, there are several consequential improvements that could be made to our fairly basic implementation of Algorithm~\ref{LehmansMethodGen} for $r=2$. These include using a more efficient method to iterate through fractions $a/b$ with $a$ and $b$ relatively prime. In the current implementation, we compute $\operatorname{gcd}(a,b)$ for each pair $(a,b)$, and if the $\operatorname{gcd}$ is nontrivial then we may skip line $7$ in Algorithm~\ref{LehmansMethodGen}. Thus, there are roughly $40N^{1/4}$ gcd computations that could be converted into simple additions, multiplications or integer divisions. 

Finally, the bound \eqref{Pbound} on the length of the interval containing candidate $u$'s could be improved significantly. We already do this in our implementation for $r=2$ by replacing the bound \eqref{Pbound} with the tighter inequality
\begin{equation}\label{Pbound2}
    \frac{P(bp,aq)}{aq+bp} = \frac{aq+8bp}{aq+2bp} < 4. 
\end{equation}
But given the extra information we have about $p$ at each step of Algorithm~\ref{LehmansMethodGen}, namely that $2^{j}N^{1/4}< p < 2^{j+1}N^{1/4}$, the simple inequality \eqref{Pbound2} could be tightened yet some more.

\bibliographystyle{plain}
\bibliography{A_Variation_on_Lehmans_Method_for_Detecting_Square-Free_Integers}

\begin{thebibliography}{10}

\bibitem{bdgh}
Dan Boneh, Glenn Durfee, and Nick Howgrave-Graham.
\newblock Factoring {$N=p^rq$} for large {$r$}.
\newblock In {\em Advances in cryptology---{CRYPTO} '99 ({S}anta {B}arbara,
  {CA})}, volume 1666 of {\em Lecture Notes in Comput. Sci.}, pages 326--337.
  Springer, Berlin, 1999.

\bibitem{booker-hiary-keating}
Andrew~R. Booker, Ghaith~A. Hiary, and Jon~P. Keating.
\newblock Detecting squarefree numbers.
\newblock {\em Duke Math. J.}, 164(2):235--275, 2015.

\bibitem{BGS}
Alin Bostan, Pierrick Gaudry, and \'{E}ric Schost.
\newblock Linear recurrences with polynomial coefficients and application to
  integer factorization and {C}artier-{M}anin operator.
\newblock {\em SIAM J. Comput.}, 36(6):1777--1806, 2007.

\bibitem{Coppersmith}
Don Coppersmith.
\newblock Finding a small root of a bivariate integer equation; factoring with
  high bits known.
\newblock In {\em Advances in cryptology---{EUROCRYPT} '96 ({S}aragossa,
  1996)}, volume 1070 of {\em Lecture Notes in Comput. Sci.}, pages 178--189.
  Springer, Berlin, 1996.

\bibitem{coppersmith2}
Don Coppersmith.
\newblock Small solutions to polynomial equations, and low exponent {RSA}
  vulnerabilities.
\newblock {\em J. Cryptology}, 10(4):233--260, 1997.

\bibitem{PrimesComp}
Richard Crandall and Carl Pomerance.
\newblock {\em Prime numbers}.
\newblock Springer, New York, second edition, 2005.
\newblock A computational perspective.

\bibitem{HardyWright}
G.~H. Hardy and E.~M. Wright.
\newblock {\em An introduction to the theory of numbers}.
\newblock Oxford University Press, Oxford, sixth edition, 2008.
\newblock Revised by D. R. Heath-Brown and J. H. Silverman, With a foreword by
  Andrew Wiles.

\bibitem{Harvey}
David Harvey.
\newblock An exponent one-fifth algorithm for deterministic integer
  factorisation.
\newblock {\em Math. Comp.}, 90(332):2937--2950, 2021.

\bibitem{rRoots}
David Harvey and Markus Hittmeir.
\newblock A deterministic algorithm for finding $r$-power divisors, 2022.

\bibitem{LogLogSpeedUp}
David Harvey and Markus Hittmeir.
\newblock A {${\rm log}$}-{${\rm log}$} speedup for exponent one-fifth
  deterministic integer factorisation.
\newblock {\em Math. Comp.}, 91(335):1367--1379, 2022.

\bibitem{harvey-hoeven}
David Harvey and Joris van~der Hoeven.
\newblock Integer multiplication in time {$O(n \log n)$}.
\newblock {\em Ann. of Math. (2)}, 193(2):563--617, 2021.

\bibitem{HittBSGS}
Markus Hittmeir.
\newblock A babystep-giantstep method for faster deterministic integer
  factorization.
\newblock {\em Math. Comp.}, 87(314):2915--2935, 2018.

\bibitem{Lehman}
R.~Sherman Lehman.
\newblock Factoring large integers.
\newblock {\em Math. Comp.}, 28:637--646, 1974.

\end{thebibliography}
\end{document}